\newtheorem{theorem}{Theorem}[section]
\newtheorem{corollary}[theorem]{Corollary}
\newtheorem{lemma}[theorem]{Lemma}
\theoremstyle{definition}
\newtheorem{definition}[theorem]{Definition}
\newtheorem{remark}[theorem]{Remark}
\newcommand{\BC}{\mathbf{C}}
\newcommand{\BP}{\mathbf{P}}
\newcommand{\BQ}{\mathbf{Q}}
\newcommand{\BR}{\mathbf{R}}
\newcommand{\BZ}{\mathbf{Z}}
\newcommand{\CC}{\mathcal{C}}
\newcommand{\CL}{\mathcal{L}}
\newcommand{\CO}{\mathcal{O}}
\newcommand{\norm}[1]{\left| #1 \right|}
\newcommand{\normsq}[1]{\norm{#1}^2}
\newcommand{\fg}{\mathfrak{g}}
\newcommand{\fgd}{\mathfrak{g}^\ast}
\newcommand{\red}{/\!/}
\newcommand{\reda}[1]{{\red\!}_{#1}}
\newcommand{\rred}{/\!/\!/}
\newcommand{\omegar}{\omega_\BR}
\newcommand{\omegac}{\omega_\BC}
\newcommand{\mur}{\mu_\BR}
\newcommand{\muc}{\mu_\BC}      
\newcommand{\muhk}{\mu_\mathrm{G,s}}
\newcommand{\iso}{\cong}
\newcommand{\bi}{\mathbf{i}}
\newcommand{\bj}{\mathbf{j}}
\newcommand{\bk}{\mathbf{k}}
\newcommand{\dif}[1][]{\operatorname{d}_{#1} \!}
\begin{document}
\title[Kirwan surjectivity and Lefschetz-Sommese theorems]{Kirwan surjectivity and Lefschetz-Sommese theorems for a generalized hyperkähler reduction}


\author[J. Fisher]{Jonathan Fisher}
\email{jonathan.m.fisher@gmail.com}

\author[L. Jeffrey]{Lisa Jeffrey}
\address{Department of Mathematics, University of Toronto, Canada}
\email{jeffrey@math.toronto.edu}

\author[A. Malus\`a]{Alessandro Malus\`a}
\address{Department of Mathematics, University of Toronto, Canada}
\email{amalusa@math.utoronto.ca}

\author[S. Rayan]{Steven Rayan}
\address{Centre for Quantum Topology and Its Applications (quanTA) and Department of Mathematics and Statistics, University of Saskatchewan, Canada}
\email{rayan@math.usask.ca}

\begin{abstract}
Let $G$ be a compact Lie group.
We study a class of Hamiltonian $(G \times S^{1})$-manifolds \emph{decorated} with a function $s$ with certain equivariance properties, under conditions on the $G$-action which we call of \emph{(semi-)linear type}.
In this context, a close analogue of hyperkähler reduction is defined, and our main result establishes surjectivity of an appropriate analogue of Kirwan’s map.
As a particular case, our setting includes a class of hyperkähler manifolds with trihamiltonian torus actions, to which our surjectivity result applies.
\end{abstract}

\maketitle


\section{Introduction}

\renewcommand{\thetheorem}{\arabic{theorem}}
In light of the remarkable success of the Atiyah-Bott-Kirwan theory for symplectic quotients \cite{YangMillsRiemannSurface,Kirwan}, it seems natural to consider Morse theory on a hyperkähler manifold $M$ with function $\normsq{\muhk}$, the norm squared of the trimoment map of a group action, in order to study the cohomology ring of the corresponding hyperk\"ahler quotient.
However, one immediately bumps into trouble due to the non-compactness of $M$.
Under the assumption that the gradient flow of $\normsq{\muc}$ converges, the requisite Morse theory was developed in \cite{JKK}.
Partial results on the convergence of the gradient flow were obtained by the first author \cite{FisherMorse}, but it appears that in general this convergence is difficult to establish.

In their study of hyperpolygon spaces, which arise as hyperkähler quotients by a compact group $G$, two of the authors~\cite{FisherRayan} established Kirwan surjectivity using circle compactness with respect to an additional $S^{1}$-action.
The construction does not, in fact, use the whole hyperkähler structure, but only one Kähler form for which the $S^{1}$-action is Hamiltonian.
Correspondingly, the quotient is viewed as the Kähler reduction of the subvariety cut by the complex moment map for $G$, regarded simply as an equivariant holomorphic function without using its relation to the group action.

In this work, we aim to extend that technique to a more general setting and obtain similar results in cohomology, which is always understood to take rational coefficients unless otherwise specified.
Namely, we consider a Kähler manifold $M$, acted on by $G \times S^{1}$ with moment map $\mu = (\mu_{G}, \mu_{S^{1}})$, and equipped with a $G$-invariant holomorphic function $s$.
Assuming that $s$ is homogeneous of some positive degree with respect to the $S^{1}$-action, we call this a \emph{decorated $G$-manifold}.
Under appropriate regularity conditions we define the \emph{hyper-reduction} of $M$ to be
\begin{equation}
	\label{eq:reduction}
	M \rred G = \bigl(\mu^{-1}(0) \cap s^{-1}(0) \bigr) / G \, ,
\end{equation}
which can be regarded as a symplectic quotient of the complex analytic subvariety $s^{-1}(0)$ and is in general a non-compact complex orbifold, smooth if $G$ acts freely on the appropriate locus.
The embeddings
\begin{equation}
	\mu^{-1}(0) \cap s^{-1}(0) \hookrightarrow \mu^{-1} (0) \hookrightarrow M
\end{equation}
induce natural maps
\begin{equation}
	\begin{gathered}
		\kappa \colon H^{*}_{G}(M) \to H^{*}_{G} \bigl(\mu^{-1}(0)\bigr) \iso H^{*} (M \red G) \, , \\
		\kappa_{G,s} \colon H^{*}_{G} (M) \to H^{*}_{G} \bigl(\mu^{-1}(0) \cap s^{-1}(0) \bigr) \iso H^{*} (M \rred G)
	\end{gathered}
\end{equation}
in rational cohomology, which we call the \emph{Kirwan map} and \emph{hyper-Kirwan map}, respectively.
Since $s$ is $G$-invariant, it also descends to a function on $M \red G$, and $M \rred G$ can equivalently be described as the zero locus of the function on the quotient.
The inclusion $\iota \colon M \rred G \hookrightarrow M \red G$ gives then a factorisation of the hyper-Kirwan map as
\begin{equation}
	H^{*}_{G} (M) \stackrel{\kappa}{\longrightarrow} H^{*} (M \red G) \stackrel{\iota^{*}}{\longrightarrow} H^{*} (M \rred G) \, .
\end{equation}
We approach surjectivity of $\kappa_{G,s}$ by studying the two maps $\kappa$ and $\iota^{*}$ separately.
As it turns out, our results actually hold under weaker conditions on $M$, which we may only require to be \emph{almost} Kähler or, for some parts, just symplectic.
Our main results are as follows.

\begin{theorem}[Cf. Theorem~\ref{thm:KirwanSurj}]
	\label{thm:kirwan_surj_intro}
	Suppose $(M, \omega)$ is a Hamiltonian $G$-manifold, equipped with an additional Hamiltonian $S^{1}$-action commuting with $G$.
	If $0 \in \fgd$ is regular for $\mu_{G}$,  $M$ and $M \red G$ are circle compact (see Definition~\ref{def:cc}), and $\overline{M \red G}$ is smooth, then the Kirwan map $\kappa \colon H^{*}_{G} (M) \to H^{*} (M \red G)$ is surjective.
\end{theorem}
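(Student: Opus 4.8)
The plan is to reduce the statement to the classical Kirwan surjectivity theorem by exploiting the compactifying $S^{1}$-action, following the strategy of \cite{FisherRayan}. The key point is that although $M$ itself is non-compact, circle compactness ensures that the Morse theory of $\normsq{\mu_{G}}$ behaves as in the compact case, because all the relevant gradient flows stay in a region that retracts onto a compact set under the $S^{1}$-flow. Concretely, I would first recall that circle compactness of $M$ gives an $S^{1}$-invariant exhaustion by compact sets, together with the fact that the negative gradient flow of $\mu_{S^{1}}$ (equivalently the downward flow toward low values of the $S^{1}$-moment map) converges; this is what replaces properness of $\normsq{\mu_{G}}$ in the Atiyah-Bott-Kirwan argument. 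The hypothesis that $0$ is regular for $\mu_{G}$ guarantees that $\mu_{G}^{-1}(0)$ is a smooth $G$-manifold with free (or locally free) action, so that $M \red G$ is a smooth symplectic orbifold and $H^{*}_{G}(\mu_{G}^{-1}(0)) \iso H^{*}(M \red G)$.

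Next I would set up the Morse-Kirwan stratification of $M$ by the norm-square of the moment map. In the compact setting one stratifies $M = \bigsqcup_{\beta} S_{\beta}$ by the stable manifolds of the critical sets $C_{\beta}$ of $\normsq{\mu_{G}}$, indexed by a finite set of values $\beta$, with $S_{0}$ an open neighbourhood of $\mu_{G}^{-1}(0)$ that $G$-equivariantly deformation retracts onto it. The equivariant Thom-Gysin sequences for the successive strata, together with the self-intersection formula, show that the restriction $H^{*}_{G}(M) \to H^{*}_{G}(S_{0}) \iso H^{*}_{G}(\mu_{G}^{-1}(0))$ is surjective, provided the equivariant Euler classes of the negative normal bundles of the $C_{\beta}$ are not zero-divisors — which holds because the $S^{1}$-action (or a maximal torus) acts on these bundles with appropriate weights. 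In the present non-compact case I would run the identical argument but restricted to each compact piece of the $S^{1}$-invariant exhaustion: circle compactness ensures the critical set index set is still locally finite and, crucially, that the stratum $S_{0}$ still retracts onto $\mu_{G}^{-1}(0)$ and that $\normsq{\mu_{G}}$ is proper \emph{along the flow}, so the Thom-Gysin machinery and the perfection of the stratification go through verbatim. The circle compactness of $M \red G$ and smoothness of $\overline{M \red G}$ are used to ensure the quotient-level geometry is well-behaved and that the cohomology of $M \red G$ is finitely generated, so that surjectivity on each compact exhausting piece assembles to surjectivity in the limit.

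The main obstacle I anticipate is controlling the non-compactness uniformly: one must verify that the negative-gradient flow of $\normsq{\mu_{G}}$ does not escape to infinity, i.e. that trajectories converge to critical points despite $M$ being open. This is exactly where circle compactness enters decisively — the $S^{1}$-moment map $\mu_{S^{1}}$ is proper and bounded below on the relevant locus, and it is non-increasing (or can be arranged to interact controllably) along the $\normsq{\mu_{G}}$-flow, confining trajectories to a compact sublevel set. A secondary technical point is checking that the equivariant Euler classes of the negative normal bundles remain non-zero-divisors in $H^{*}_{G}(C_{\beta})$ in the orbifold/non-free setting; here one passes to rational coefficients and uses the residual torus action on the normal bundle, as in the standard argument, so this is routine once the convergence is in hand. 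Finally, I would assemble the pieces: surjectivity of $H^{*}_{G}(M) \to H^{*}_{G}(\mu_{G}^{-1}(0))$ combined with the isomorphism $H^{*}_{G}(\mu_{G}^{-1}(0)) \iso H^{*}(M \red G)$ yields that $\kappa$ is surjective.
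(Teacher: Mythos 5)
Your approach---running Kirwan's Morse theory for $\normsq{\mu_{G}}$ directly on the non-compact manifold $M$---is genuinely different from the paper's, and it breaks down at exactly the point the paper's argument is designed to avoid. The load-bearing claim is that circle compactness confines the negative gradient trajectories of $\normsq{\mu_{G}}$ to compact sets because $\mu_{S^{1}}$ is ``non-increasing (or can be arranged to interact controllably)'' along that flow. This is not justified, and there is no reason for it to hold: along the downward flow one computes
\begin{equation*}
	\frac{d}{dt}\,\mu_{S^{1}} \;=\; -\,g\bigl(\nabla\mu_{S^{1}},\,\nabla\normsq{\mu_{G}}\bigr) \;=\; -2\,g\bigl(\xi_{S^{1}},\,\mu_{G}(x)^{\#}\bigr)\, ,
\end{equation*}
which has no definite sign in general; only $\mu_{S^{1}}$, not $\mu_{G}$, is assumed proper, so nothing prevents $\normsq{\mu_{G}}$-trajectories from escaping every sublevel set of $\mu_{S^{1}}$. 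Convergence of this flow on non-compact spaces is precisely the difficulty the introduction flags as open in general (cf.~\cite{JKK,FisherMorse}), so it cannot be waved through as ``routine once the convergence is in hand.'' The final assembly step is also unjustified: surjectivity on each piece of a compact exhaustion does not pass to the limit without a Mittag-Leffler-type argument, and the exhaustion pieces are manifolds with boundary whose restriction maps you do not control.

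The paper sidesteps the flow-convergence problem entirely. It forms the cut compactification $\overline{M \red G} = (M \times \BC) \reda{c} (G \times S^{1})$, observes that the full $(G \times S^{1})$-moment map on $M \times \BC$ \emph{is} proper (because $\mu_{S^{1}}$ is proper and bounded below), so classical Atiyah--Bott--Kirwan surjectivity applies to $H^{*}_{G \times S^{1}}(M \times \BC) \to H^{*}(\overline{M \red G})$; it then shows $H^{*}(\overline{M \red G}) \to H^{*}(M \red G)$ is surjective via the Thom--Gysin sequence of the boundary divisor (Lemma~\ref{lemma-restriction-surjectivity}), and closes the diagram using $H^{*}_{G \times S^{1}}(M \times \BC^{*}) \iso H^{*}_{G}(M)$. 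This is where the hypotheses your sketch leaves essentially idle---circle compactness of $M \red G$ and smoothness of $\overline{M \red G}$---do their actual work. To salvage your route you would need a genuine confinement or convergence theorem for the $\normsq{\mu_{G}}$-flow, which is not available at this level of generality.
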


\begin{theorem}[Cf. Theorem~\ref{thm:equivariant_sommese_quotient}]
	\label{thm:eq_sommese_intro}
	Suppose $M$ is an almost Kähler manifold equipped with a Hamiltonian $(G \times S^{1})$-action and a holomorphic function $s$ valued in some complex vector space $V$ such that
	\begin{enumerate}
		\item $s$ is $G$-invariant and homogeneous of some positive degree $d$ for the $S^{1}$-action;
		\item $0$ is a regular value of both $s$ and $\mu_{G}$, $\mu_{G}^{-1} (0) \pitchfork s^{-1}(0)$, and $G$ acts freely on $\mu_{G}^{-1} (0)$;
		\item $M$ and $M \red G$ are circle compact (see Definition~\ref{def:cc}) for their respective $S^{1}$-actions.
	\end{enumerate}
	Then the restriction map
	\begin{equation}
		H^{*} (M \red G; R) \longrightarrow H^{*} (M \rred G; R)
	\end{equation}
	is an isomorphism for every coefficient ring $R$.
\end{theorem}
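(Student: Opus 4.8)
The plan is to reduce the statement to the Morse--Bott theory of the residual circle action on the quotient, in the spirit of \cite{FisherRayan}. Write $X = M \red G$ and $Z = M \rred G$, so that the map in question is $\iota^{*}$ for the inclusion $\iota \colon Z \into X$. Since $s$ is $G$-invariant it descends to a holomorphic map $\bar s \colon X \to V$, and $Z = \bar s^{-1}(0)$. Because $0$ is regular for $\mu_{G}$ and $G$ acts freely on $\mu_{G}^{-1}(0)$, the quotient $X$ is a smooth almost Kähler (hence symplectic) manifold; the transversality hypothesis together with regularity of $s$ makes $\bar s$ a submersion along $Z$, so $Z$ is a closed almost-complex --- hence symplectic --- submanifold of $X$ of real codimension $2 \dim_{\BC} V$. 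The commuting $S^{1}$-action descends to $X$ and preserves $Z$; let $\phi \colon X \to \BR$ be the induced moment map, for which $X$ is circle compact by hypothesis~(3). Finally, $\bar s$ is $S^{1}$-equivariant and homogeneous of degree $d$, so the normal bundle $\nu_{Z/X} \iso \bar s^{*}TV|_{Z}$ carries the weight-$d$ representation of $S^{1}$.

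The crucial input is a positivity statement coming from $d > 0$. Evaluating $\bar s(\lambda \cdot x) = \lambda^{d}\,\bar s(x)$ at a fixed point $x$ and letting $\lambda \to 0$ forces $\bar s(x) = 0$; thus $X^{S^{1}} \subseteq Z$, so $Z^{S^{1}} = X^{S^{1}}$ is compact, $\phi|_{Z}$ is proper and bounded below, and the downward gradient flow on $Z$ --- the restriction of that on $X$, since $Z$ is invariant under the associated $\BR_{>0}$-action --- converges; hence $Z$ is itself circle compact. Moreover, at each connected component $F \subseteq X^{S^{1}}$ the normal directions to $Z$ have $S^{1}$-weight $d > 0$, so there $\phi$ is non-degenerate and positive-definite in the $\nu_{Z/X}$-directions; consequently the negative normal bundle $\nu^{-}_{F}$ of $\phi$ at $F$ lies in $TZ|_{F}$ and agrees with the negative normal bundle of $\phi|_{Z}$ at $F$. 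In particular the Morse--Bott index $\lambda_{F}$ and the orientation local system $o_{F}$ of that bundle are the same whether computed in $X$ or in $Z$.

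I would then run the Morse--Bott theory of $\phi$ on $X$ and of $\phi|_{Z}$ on $Z$, which is legitimate precisely because both are circle compact, so $\phi$ is a Morse--Bott function, bounded below, with compact critical locus of finitely many components and convergent downward flow. For any coefficient ring $R$ this produces spectral sequences with $E_{1}$-page $\bigoplus_{F} H^{*-\lambda_{F}}(F; o_{F})$ --- coefficients in $R$, twisted by the orientation system --- converging to $H^{*}(X; R)$, resp. to $H^{*}(Z; R)$; the two $E_{1}$-pages are literally equal by the previous paragraph. Since $\phi|_{Z} = \phi \circ \iota$, one has $\iota^{-1}(\phi^{-1}(-\infty, c]) = (\phi|_{Z})^{-1}(-\infty, c]$ for every $c$, so $\iota^{*}$ is filtration-preserving and induces a morphism of these spectral sequences; restricting a local Morse--Bott generator near $F$ to $Z$ only discards the positive weight-$d$ normal directions, which enter neither the base $F$ nor the negative-normal Thom class, so this morphism is the identity on $E_{1}$. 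Hence $\iota^{*}$ is an isomorphism on every page, in particular on $E_{\infty}$, and --- the filtrations being finite --- on $H^{*}(-; R)$, which is the assertion. (Equivalently one may induct over the critical values of $\phi$, comparing via the five lemma the long exact sequences of the pairs $(\phi^{-1}(-\infty, c+\varepsilon], \phi^{-1}(-\infty, c-\varepsilon])$ for $X$ and for $Z$; and knowing in addition that $X$, hence $Z$, deformation retracts onto a common compact core via the downward flow would upgrade the conclusion to a homotopy equivalence, presumably the reason only the cohomological form is claimed.)

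I expect the genuine work to be in two places. First, the Morse--Bott machinery in the non-compact setting: checking that circle compactness really makes the sublevel-set filtration of $\phi$ exhaust $X$ and behave as in the compact case --- convergence of the spectral sequence, deformation retractions across critical levels, finiteness of the filtration --- which is the technical engine built earlier in the paper and reused here; and that $Z$ genuinely inherits circle compactness, for which $X^{S^{1}} \subseteq Z$ is the decisive point. Second, and conceptually, the positivity observation that $d > 0$ places $\nu_{Z/X}$ entirely among the positive normal directions of $\phi$: this is the Sommese-type ``ampleness'' hypothesis in disguise, with positivity of a bundle replaced by homogeneity of positive degree under a circle-compact action, and it is exactly what fails --- together with the theorem --- if, say, $s$ is homogeneous of degree $0$, in which case $Z$ is merely a generic real-codimension-$2\dim_{\BC}V$ submanifold.
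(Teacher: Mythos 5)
Your proposal is correct and follows essentially the same route as the paper: descend $s$ to $\check{s}$ on $M \red G$, check that $0$ remains a regular value there, observe that the $S^{1}$-fixed points lie in the zero locus with all normal weights equal to $d>0$ so the negative normal bundles agree, and compare the Morse--Bott theories of the moment map on $M \red G$ and on $M \rred G$ (the paper packages exactly this comparison as its Theorem~\ref{thm:equivariant_sommese} applied to the quotient, running a five-lemma induction on Thom--Gysin sequences rather than the spectral sequence, but the content is identical). The only slips are cosmetic: $X^{S^{1}} \subseteq Z$ follows from $s(x) = t^{d} s(x)$ for all $t \in S^{1}$ rather than from letting ``$\lambda \to 0$'' (there is no such limit inside $S^{1}$, and no ambient $\BC^{*}$-action is assumed), and the downward flow on $Z$ is the restriction of that on $X$ because $\nabla \mu_{S^{1}} = I\xi_{S^{1}}$ is tangent to the $S^{1}$-invariant almost complex submanifold $Z$, not because of an $\BR_{>0}$-action.
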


Combining these, we obtain the main result of this paper.

\begin{theorem}[Cf. Theorem~\ref{thm:main}]
	\label{thm:main_intro}
	When the assumptions of both Theorems~\ref{thm:kirwan_surj_intro} and~\ref{thm:eq_sommese_intro} are satisfied, the hyper-Kirwan map is surjective.
\end{theorem}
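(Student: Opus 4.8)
The plan is to deduce the statement formally from Theorems~\ref{thm:kirwan_surj_intro} and~\ref{thm:eq_sommese_intro}, using the factorisation of the hyper-Kirwan map recorded in the introduction. Recall that, since $s$ is $G$-invariant, it descends to a holomorphic function on $M \red G$ whose zero locus is exactly $M \rred G$; the resulting inclusion $\iota \colon M \rred G \into M \red G$ yields a commuting triangle
\begin{equation*}
	H^{*}_{G}(M) \stackrel{\kappa}{\longrightarrow} H^{*}(M \red G) \stackrel{\iota^{*}}{\longrightarrow} H^{*}(M \rred G)
\end{equation*}
with $\kappa_{G,s} = \iota^{*} \circ \kappa$, all in rational cohomology.

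First I would check that the two sets of hypotheses are mutually consistent and that all the objects in sight are genuinely defined: freeness of the $G$-action on $\mu_{G}^{-1}(0)$ together with $0$ being a regular value of $\mu_{G}$ makes $M \red G$ a smooth symplectic manifold carrying the induced Hamiltonian $S^{1}$-action; transversality $\mu_{G}^{-1}(0) \pitchfork s^{-1}(0)$ with $0$ regular for $s$ makes $M \rred G$ a smooth complex submanifold of it; and the circle-compactness requirements occurring in both theorems all refer to this same induced $S^{1}$-action, so they do not conflict. That these combined hypotheses are not vacuous is precisely what the examples --- hyperkähler manifolds with trihamiltonian torus actions, hyperpolygon spaces in particular --- are meant to demonstrate.

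With that in place the conclusion is immediate. Theorem~\ref{thm:kirwan_surj_intro} gives surjectivity of the Kirwan map $\kappa \colon H^{*}_{G}(M) \to H^{*}(M \red G)$, and Theorem~\ref{thm:eq_sommese_intro}, specialised to the coefficient ring $R = \BQ$, gives that $\iota^{*} \colon H^{*}(M \red G) \to H^{*}(M \rred G)$ is an isomorphism, hence surjective. A composition of surjections is surjective, so $\kappa_{G,s} = \iota^{*} \circ \kappa$ is surjective.

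I do not expect a genuine obstacle at this stage, since all the substantive work lives in the two ingredient theorems; the only point deserving a moment's care is the bookkeeping of coefficients, namely that the Lefschetz--Sommese-type isomorphism of Theorem~\ref{thm:eq_sommese_intro}, stated over an arbitrary coefficient ring, may in particular be taken over $\BQ$ so as to compose cleanly with the rational Kirwan surjectivity of Theorem~\ref{thm:kirwan_surj_intro}.
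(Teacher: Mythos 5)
Your proposal is correct and follows exactly the paper's own argument: factor $\kappa_{G,s} = \iota^{*} \circ \kappa$, apply Theorem~\ref{thm:kirwan_surj_intro} for surjectivity of $\kappa$ and Theorem~\ref{thm:eq_sommese_intro} (with $R = \BQ$) for $\iota^{*}$ being an isomorphism, and compose. The additional remarks on consistency of hypotheses and coefficient bookkeeping are sensible but not needed beyond what the paper records.
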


Our theorem applies in particular to hypertoric varieties \cite{BielawskiDancer, KonnoToric} and hyperpolygon spaces \cite{KonnoPolygon, FisherRayan}.

\begin{remark}
	Independently, McGerty and Nevins have obtained a surjectivity result for a large class of algebraic symplectic quotients, based on their results on the derived categories of quotient stacks~\cite{McGertyNevinsSurj}.
	
	Our results are also related to those of~\cite{Wilkin19}.
\end{remark}

\subsection{Hypertoric case}
The setup above includes the case of hypertoric manifolds, i.e. hyperkähler manifolds with a trihamiltonian action of a compact abelian group $T$, in the following way.

Recall that a hyperkähler manifold is a Riemannian manifold $(M,g)$ equipped with three compatible complex structures $(\bi, \bj, \bk)$ satisfying the quaternion relations.
We denote by $(\omega_{1}, \omega_{2}, \omega_{3})$ the respective symplectic (Kähler) forms.
Focusing on the particular complex structure $\bi$, it is common to denote by $\omegar \coloneqq \omega_{1}$ and $\omegac \coloneqq \omega_{2} + \sqrt{-1} \omega_{3}$ the \emph{real} and \emph{complex} (or \emph{holomorphic}) symplectic forms, respectively.

If a Lie group $G$ acts on $M$, we say the action is \emph{trihamiltonian} if there exists a \emph{hyperkähler moment map}, i.e. a function
\begin{equation}
	\mu_{G} = (\mu_{1}, \mu_{2}, \mu_{3}) \colon M \to \fgd \otimes \BR^{3} \, ,
\end{equation}
where $\fg$ denotes the Lie algebra of $G$, such that each $\mu_{i}$ is an ordinary moment map with respect to $\omega_{i}$.
Again, it is customary to call $\mur \coloneqq \mu_{1}$ and $\muc \coloneqq \mu_{2} + \sqrt{-1} \mu_{3}$ the \emph{real} and \emph{complex} (or \emph{holomorphic}) moment maps.
It is easy to see that $\muc \colon M \to \fgd_{\BC}$ is indeed holomorphic with respect to the complex structure $\bi$, and so its level sets are complex analytic subvarieties.

Given such data, the hyperkähler quotient construction~\cite{HKLR} is a quaternionic (or holomorphic symplectic) analogue of the Marsden-Weinstein reduction.
Assuming that $0$ is a regular value for the real moment map, one can note that $(0, 0, 0)$ is regular for the hyperkähler moment map.
The Kähler and hyperkähler quotients of $M$ are then defined, respectively, to be
\begin{equation}
	\begin{aligned}
		M \red G &\coloneqq \mur^{-1}(0) / G \, , \\
		M \rred G &\coloneqq \mu_{G}^{-1}(0,0,0) / G = \bigl(\mur^{-1}(0) \cap \muc^{-1}(0) \bigr) / G \, .
	\end{aligned}
\end{equation}
Although $M \rred G$ is naturally hyperkähler, if $G = T$ is abelian and one only considers the orbifold or Kähler structure on the reduction, then what we just described is a particular case of the hyper-reduction of eq.~\eqref{eq:reduction} for $s = \mu_{\BC}$.
Our theory may then be applied once an additional $S^{1}$-action with the required properties is given---as in the setting of~\cite{FisherRayan}, the latter will be assumed to act by hyperkähler rotations rather than fixing the entire structure, as long as it is Hamiltonian for the Kähler form it preserves.

\subsection*{Plan of the exposition}

Our paper is structured as follows.
In Section~\ref{sec:kirsurj}, we will discuss the notion of circle compactness and the relevant associated concepts, and prove Theorem~\ref{thm:kirwan_surj_intro}.
In Section~\ref{sec:restrictions}, after introducing more definitions, we will show several results relating the cohomology of manifolds with appropriate structure to that of subspaces obtained as vanishing loci of functions or sections, including Theorems~\ref{thm:eq_sommese_intro} and~\ref{thm:main_intro}.
In the final Section~\ref{sec:hypertoric} we focus on the case of a hypertoric manifolds and show an extended version of Theorem~\ref{thm:main_intro} in that case.

\subsection*{Acknowledgements}
All authors wish to acknowledge the contributions of Young-Hoon Kiem, Frances Kirwan, and Jon Woolf, who were co-authors of an earlier draft of this manuscript.
Many of the ideas in the present version originated in that earlier draft.
We would also like to acknowledge Tam\'as Hausel for carrying out a detailed analysis which provided a counterexample to an argument stated in that earlier version of this manuscript, which was written by JF and LJ together with other authors.
JF and LJ would like to thank Peter Crooks for his reading of said prior version.
AM and SR are grateful to JF and LJ for the invitation to join the revised manuscript and for the ensuing collaborative discussions.
We also thank Jon Woolf for helpful comments at one point during the preparation of the revised manuscript.
JF acknowledges an affiliation with, and support from, the University of Hamburg in 2014 during a postdoctoral fellowship in which some of the material in this manuscript was developed.
LJ is partially supported by an NSERC Discovery Grant.
AM acknowledges generous funding from LJ's Discovery Grant as well as that of Marco Gualtieri.
SR is partially supported by an NSERC Discovery Grant, a Pacific Institute for the Mathematical Sciences (PIMS) Collaborative Research Group (CRG) grant, and a Tri-Agency New Frontiers in Research (NFRF) Exploration Stream grant.
Finally, we thank the anonymous referee for a careful reading and useful suggestions.

\renewcommand{\thetheorem}{\arabic{section}.\arabic{theorem}}

\section{Kirwan surjectivity for circle compact manifolds}
\label{sec:kirsurj}

Throughout this section, $M$ will denote a symplectic manifold with a fixed Hamiltonian $S^{1}$-action.
We emphasise again that, wherever unspecified and unless otherwise noted, all cohomology groups are taken with respect to rational coefficients.

\begin{definition}
	\label{def:cc}
	$M$ is said to be \emph{circle compact}%
	\footnote{Also called \emph{semiprojective}~\cite{Hausel2013} in the Kähler case.}
	if the $S^{1}$-action satisfies the following conditions:
	\begin{enumerate}
		\item The fixed point set $M^{S^{1}}$ is compact;
		\item The $S^{1}$-moment map $\mu_{S^{1}} \colon M \to \BR$ is proper and bounded below.
	\end{enumerate}
\end{definition}

Given a circle compact manifold $M$, we may construct a compactification $\overline{M}$ as follows~\cite{Lerman, Hausel1998}.
Consider on $M \times \BC$ the moment map $\widetilde{\mu}_{S^{1}}$ of the diagonal $S^{1}$-action, and fix $c \in \BR$ larger than all the critical values of $\mu_{S^{1}}$.
One sees that
\begin{equation}
	\widetilde{\mu}_{S^{1}}^{-1} (c) = \bigl(M^{<c} \times S^{1} \bigr) \cup \mu_{S^{1}}^{-1} (c) \, ,
\end{equation}
where $M^{<c} \coloneqq \mu_{S^{1}}^{-1} \bigl((-\infty,c)\bigr)$ is, by elementary Morse theory with the moment map, diffeomorphic to $M$ through an equivariant map identifying $\mu_{S^{1}}$ with its restriction.
Since $c$ is regular for both $\mu_{S^{1}}$ and $\widetilde{\mu}_{S^{1}}$, one can write
\begin{equation}
	(M \times \BC) \reda{c} S^{1} = M^{<c} \cup \bigl( M \reda{c} S^{1} \bigr) \, ,
\end{equation}
and it follows from the assumptions on $\mu_{S^{1}}$ that the above is compact, and again by Morse theory different choices of $c$ produce diffeomorphic orbifolds.
Although $c$ does affect the symplectomorphism classes of $M^{<c}$ and $\overline{M}$, as one may note by considering their volumes, we shall neglect this dependence as it is of no consequence for our arguments, and henceforth often identify $M^{<c} \subseteq \overline{M}$ with $M$.
There is also an additional Hamiltonian $S^{1}$-action on $\overline{M}$ descending from those on the first factor of $M \times \BC$, and both restrict the existing structure on $M$.

\begin{definition}
	\label{def:cut_compactification}The \emph{cut compactification} of a circle compact symplectic manifold $M$ is the orbifold
	\begin{equation}
		\overline{M} \coloneqq (M \times \BC) \reda{c} S^{1} \, .
	\end{equation}
	The \emph{boundary divisor} of $\overline{M}$ is
	\begin{equation}
		D_{M} \coloneqq \overline{M} \setminus M \iso M \reda{c} S^{1} \, .
	\end{equation}
\end{definition}

The term ``boundary divisor", while a slight abuse of notation in this context, is inspired by the case where $M$ is Kähler.

\begin{lemma}
	\label{lemma-restriction-surjectivity}
	If $M$ is circle compact and $S^{1}$ acts freely on $\mu_{S^{1}}^{-1} (c)$, then the natural restriction $H^\ast(\overline{M}) \to H^\ast(M)$ is surjective.
\end{lemma}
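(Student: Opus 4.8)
The plan is to exploit the decomposition $\overline{M} = M^{<c} \cup (M \reda{c} S^{1})$ together with a Morse-theoretic or Thom--Gysin argument relating $H^*(\overline{M})$ to the cohomology of its open piece $M \cong M^{<c}$ and its boundary divisor $D_M = M \reda{c} S^1$. Concretely, $D_M$ sits inside $\overline{M}$ as a (suborbifold) divisor with a tubular neighbourhood that is the total space of a line-bundle-like disc bundle; excision gives $H^*(\overline{M}, M) \iso H^*(\nu(D_M), \nu(D_M)\setminus D_M)$, which by the Thom isomorphism for this rank-$2$ oriented bundle is $H^{*-2}(D_M)$. Thus the long exact sequence of the pair $(\overline{M}, M)$ reads
\begin{equation}
	\cdots \to H^{*-2}(D_M) \xrightarrow{\ \cup\, e\ } H^{*}(\overline{M}) \xrightarrow{\ r\ } H^{*}(M) \xrightarrow{\ \partial\ } H^{*-1}(D_M) \to \cdots,
\end{equation}
so surjectivity of the restriction $r$ is equivalent to injectivity of the connecting map $\partial \colon H^*(M) \to H^{*-1}(D_M)$, equivalently to the vanishing of $\partial$, equivalently to the Gysin/Euler-class map $\cup\, e \colon H^{*-2}(D_M) \to H^*(\overline{M})$ being injective.

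First I would set up the geometry carefully: since $c$ is a regular value of $\mu_{S^1}$ and $S^1$ acts freely on $\mu_{S^1}^{-1}(c)$, the reduced space $D_M = \mu_{S^1}^{-1}(c)/S^1$ is a smooth symplectic manifold, and a neighbourhood of $D_M$ in $\overline M$ is modelled on the associated complex line bundle $L \to D_M$ coming from the principal $S^1$-bundle $\mu_{S^1}^{-1}(c) \to D_M$; its Euler class $e \in H^2(D_M)$ is the first Chern class of $L$. Next I would identify the connecting homomorphism: by standard Morse theory for the moment map (exactly the ``elementary Morse theory'' invoked just above in the excerpt), crossing the top critical level of $\mu_{S^1}$ — or rather, passing from $M^{<c}$ to $\overline M$ — attaches cells dual to $D_M$, and the boundary map in the exact sequence is cap/cup with the Euler class of the normal bundle. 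The key point to establish is then that this Euler class map is injective.

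The cleanest route is to use the extra Hamiltonian $S^1$-action on $\overline M$ descending from the first factor of $M \times \BC$: with respect to that action $D_M$ is a component of the fixed-point set (it is the ``divisor at infinity''), and the Atiyah--Bott--Kirwan lemma on perfection of the moment-map stratification — or the localization/Morse-Bott argument for $\mu_{S^1}$ on $\overline M$ — shows that the normal bundle to $D_M$ has an Euler class which is not a zero divisor, because it is a sum of line bundles on which $S^1$ acts with nonzero weight, so $e$ is invertible after inverting the equivariant parameter and in particular injective on rational cohomology. Equivalently one can argue directly: $H^*(\overline M)$ is a free module over $H^*(D_M)$-via-$e$ in the relevant range because the Morse stratification of $\overline M$ by $\mu_{S^1}$ is perfect (the cut compactification is built precisely so that $\mu_{S^1}$ is a proper Morse--Bott function bounded below with compact critical sets), and the top stratum $\mu_{S^1}^{-1}(\text{near } c)$ contributes $D_M$ with the shift by its index. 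Either way, $\partial = 0$ and $r$ is surjective.

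The main obstacle I anticipate is making the ``tubular neighbourhood / Thom isomorphism'' step rigorous in the orbifold setting, since $\overline M$ is only an orbifold: one must either work with rational cohomology (where orbifolds behave like manifolds for these purposes, Thom isomorphism and Poincaré duality holding over $\BQ$) or phrase everything $S^1$-equivariantly upstairs on $M \times \BC$ and then pass to the quotient, checking that the freeness hypothesis on $\mu_{S^1}^{-1}(c)$ guarantees the quotient near $D_M$ is an honest manifold bundle. A secondary subtlety is confirming that the connecting map really is cup product with the Euler class and not merely ``some map'' — this is the usual identification of the boundary in the long exact sequence of a pair with the normal Euler class of a submanifold, valid here because $D_M \subset \overline M$ is a closed suborbifold of (real) codimension $2$ with oriented normal bundle. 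Given those two checks, the argument is otherwise formal.
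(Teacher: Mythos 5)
Your proposal is essentially the paper's argument: both reduce the claim to the splitting of the Thom--Gysin sequence of the codimension-$2$ suborbifold $D_M \subset \overline{M}$ and justify that splitting by perfection of the $S^{1}$-moment map as a Morse--Bott function on the cut space (Kirwan's Proposition~5.8 and Remark~5.9 in \cite{Kirwan}), the paper phrasing this as the Poincar\'e-polynomial identity $P_t(\overline{M}) = P_t(M) + t^2 P_t(D_M)$ rather than as injectivity of the Gysin pushforward. One caution about your first route: the \emph{ordinary} Euler class of the normal bundle is a degree-$2$ class on the compact $D_M$, so cup product with it annihilates $H^{\mathrm{top}}(D_M)$ and is never injective in all degrees; the non-zero-divisor property holds only for the \emph{equivariant} Euler class (via the nonzero normal weight), and passing from injectivity of the equivariant Gysin map to the ordinary one is not automatic --- it requires freeness of the relevant equivariant cohomologies over $H^{*}(BS^{1})$, i.e.\ exactly the perfection statement (proved by Kirwan's Serre-spectral-sequence argument) that you invoke in your fallback, which is the route the paper actually takes.
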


\begin{proof}
	Under our assumptions, both $\overline{M}$ and $D_{M}$ are smooth, the latter an embedded submanifold.
	The boundary divisor is, by construction, the absolute maximum locus of the moment map on $\overline{M}$.
	Therefore, it is a critical component of index equal to its codimension, i.e. $2$, while all other components are contained in $M$.

	According to Proposition~5.8 of~\cite{Kirwan} and Remark/Paragraph 5.9 therein, the moment map of an $S^{1}$-action on a compact symplectic manifold is perfect as a Morse-Bott function.
	In her proof, Kirwan establishes the converse of the Morse inequalities by inspecting the Serre spectral sequence of a homotopy quotient, an argument that does not rely on compactness.
	The conclusion then holds whenever Morse inequalities can be established, which is the case for $M$ since $\mu_{S^{1}}$ is proper and bounded below.
	We may then conclude that
	\begin{equation}
		P_{t} (\overline{M}) = P_{t}(M) + t^{2} P_{t} (D_{M}) \, , 
	\end{equation}
	where $P_{t}$ denotes Poincaré polynomial in the variable $t$.
	Therefore, by induction, the Thom-Gysin sequence of $D_{M} \subseteq \overline{M}$ splits into short exact portions
	\begin{equation}
		0  \longrightarrow H^{\ast-2} (D_M) \longrightarrow  H^\ast(\overline{M}) \longrightarrow H^\ast(M) \longrightarrow 0 \, ,
	\end{equation}
	whence the result.
\end{proof}

We now turn our attention to the case where $M$ is equipped with an additional action of a compact Lie group $G$.
Suppose the overall $G \times S^{1}$-action is Hamiltonian, so in particular the moment map $\mu = (\mu_{G}, \mu_{S^{1}})$ is equivariant, and that $0$ is a regular value for $\mu_{G}$.
In that case, the $S^{1}$-action and its Hamiltonian function descend to the reduction $M \red G$, and it makes sense to talk about circle compactness of the latter space.

\begin{theorem}
	\label{thm:KirwanSurj}
	Suppose $G$ is a compact Lie group, $(M, \omega)$ a Hamiltonian $G \times S^{1}$-space with $0 \in \fgd$ as a regular value of the $G$-moment map.
	If both $M$ and $M \red G$ are circle compact, the latter with smooth compactification, then the Kirwan map $\kappa \colon H^{*}_{G} (M) \to H^{*}(M \red G)$ is surjective.
\end{theorem}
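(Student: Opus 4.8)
The plan is to reduce Theorem~\ref{thm:KirwanSurj} to the non-equivariant surjectivity statement of Lemma~\ref{lemma-restriction-surjectivity} applied to the cut compactifications, together with the classical Kirwan surjectivity theorem for \emph{compact} Hamiltonian $G$-manifolds. The key geometric observation is that cutting and reducing commute: since the extra $S^{1}$-action on $M$ commutes with $G$, the diagonal $S^{1}$-action on $M \times \BC$ also commutes with $G$, and one can perform the symplectic cut at a regular level $c$ either before or after the $G$-reduction. Concretely, I would first argue that $\overline{M} \reda{0} G \iso \overline{M \red G}$ as orbifolds: both sides are built from $\mu_{G}^{-1}(0)$ by the same symplectic-cut procedure with respect to the induced $S^{1}$-action and its moment map, using that $0$ is regular for $\mu_{G}$ on $M$ (hence on $M \times \BC$) and that $c$ is regular for the various $S^{1}$-moment maps. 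This requires a compatibility check that $0 \in \fgd$ remains a regular value of the $G$-moment map on the relevant locus inside $M \times \BC$, which follows because the $\BC$-factor carries the trivial $G$-action, and that $G$ acts with the same stabilisers after the cut.

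Next I would assemble the commuting square of restriction and Kirwan maps
\begin{equation}
	\begin{gathered}
		H^{*}_{G}(\overline{M}) \longrightarrow H^{*}_{G}(M) \\
		\downarrow \qquad\qquad\qquad \downarrow \\
		H^{*}(\overline{M} \reda{0} G) \longrightarrow H^{*}(M \red G)
	\end{gathered}
\end{equation}
where the vertical arrows are Kirwan maps and the horizontal arrows are restrictions to the open dense pieces $M \subseteq \overline{M}$ and $M \red G \subseteq \overline{M \red G} \iso \overline{M} \reda{0} G$. The left vertical map is surjective by Kirwan's original theorem, since $\overline{M}$ is a \emph{compact} Hamiltonian $G$-manifold (compactness is exactly the output of the circle-compactness hypothesis on $M$ together with the cut construction) and $0$ is a regular value of its $G$-moment map. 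The bottom horizontal map is surjective by Lemma~\ref{lemma-restriction-surjectivity} applied to the circle compact manifold $M \red G$, whose compactification $\overline{M \red G}$ is smooth by hypothesis — here I must also verify the freeness condition in that lemma, namely that $S^{1}$ acts freely on the level set $\mu_{S^{1}}^{-1}(c)$ inside $M \red G$; this can be arranged by a generic choice of $c$, or follows from the smoothness hypothesis on $\overline{M \red G}$ if one interprets it as requiring a genuine (effective, locally free) quotient. Chasing the square, the composite $H^{*}_{G}(\overline{M}) \to H^{*}(M \red G)$ is surjective, and since it factors through $\kappa$, the Kirwan map $\kappa$ is surjective.

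The step I expect to be the main obstacle is establishing the isomorphism $\overline{M} \reda{0} G \iso \overline{M \red G}$ carefully enough that the square of Kirwan maps genuinely commutes at the level of cohomology — i.e. that the identification is compatible with the inclusions of equivariant subsets $\mu_{G}^{-1}(0) \cap (\text{stuff})$ into the ambient spaces, so that naturality of the restriction and Kirwan maps applies. One subtlety is that the cut compactification depends on the choice of $c$, and one must take a single $c$ larger than all critical values of $\mu_{S^{1}}$ on $M$ \emph{and} large enough to be regular after reduction; since the reduced moment map is the descent of $\mu_{S^{1}}|_{\mu_{G}^{-1}(0)}$, its critical values form a subset of those upstairs, so a common good $c$ exists. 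A second subtlety, already flagged in the hypotheses, is that $\overline{M \red G}$ must be smooth for Lemma~\ref{lemma-restriction-surjectivity} to apply verbatim; if $G$ does not act freely on $\mu_{G}^{-1}(0)$ the reduction is only an orbifold, but since we work with rational coefficients the Thom--Gysin argument of the lemma goes through for orbifolds as well, so this causes no real trouble. The remaining verifications — equivariance of all maps, regularity of $c$ and of $0$, and the factorisation of the composite through $\kappa$ — are routine.
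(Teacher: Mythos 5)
Your overall strategy---compactify, apply a compactness-based surjectivity theorem upstairs, and descend to the open piece via Lemma~\ref{lemma-restriction-surjectivity}---is the same as the paper's, but you route it differently: you form the cut $\overline{M}$ first and then reduce by $G$, invoking classical Kirwan surjectivity for the \emph{compact} space $\overline{M}$, whereas the paper never forms $\overline{M} \reda{0} G$ at all. Instead it reduces $M \times \BC$ by the full group $G \times S^{1}$ in one step, using Atiyah--Bott--Kirwan surjectivity of $H^{*}_{G\times S^{1}}(M\times\BC) \to H^{*}(\overline{M\red G})$ (valid because properness of $\mu_{S^{1}}$ makes the full moment map on $M \times \BC$ proper), together with the identification $H^{*}_{G\times S^{1}}(M\times\BC^{*}) \iso H^{*}_{G}(M)$ coming from freeness of the $S^{1}$-action on the $\BC^{*}$ factor. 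Both routes use Lemma~\ref{lemma-restriction-surjectivity} for the restriction to $M\red G$ and a naturality argument for the commuting square; yours only ever quotes Kirwan's theorem for compact manifolds, while the paper's avoids having to analyse the $G$-action on the boundary divisor $D_{M}$.

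That last point is where your write-up has a genuine gap. You assert that $0 \in \fgd$ remains a regular value of the $G$-moment map on $\overline{M}$ ``because the $\BC$-factor carries the trivial $G$-action.'' That argument shows $0$ is regular for $\mu_{G}$ on $M \times \BC$, but $\overline{M}$ is the quotient $(M\times\BC)\reda{c}S^{1}$, and stabilizers can jump under this quotient: the $G$-stabilizer of a boundary point $[x] \in D_{M}$ is $\{g \in G : gx \in S^{1}x\}$, which can be positive-dimensional even when every $G_{x}$ is discrete---precisely when some one-parameter subgroup of $G$ moves $x$ along its $S^{1}$-orbit. Since regularity of $0$ for the descended $G$-moment map at $[x]$ is equivalent to discreteness of this stabilizer, the left-hand vertical arrow of your square (more precisely, the identification $H^{*}_{G}(\mu_{G}^{-1}(0)\cap\overline{M}) \iso H^{*}(\overline{M}\reda{0}G)$ it requires) is not justified as stated. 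The repair is to note that what you actually need is local freeness of the $G\times S^{1}$-action on $\{\mu_{G}=0\}\cap\widetilde{\mu}_{S^{1}}^{-1}(c) \subseteq M\times\BC$, which is exactly the condition implicitly packaged in the hypothesis that the compactification $\overline{M\red G}$ is a genuine smooth quotient; once you extract that condition and use it explicitly, your diagram chase closes. Alternatively, follow the paper and work with $H^{*}_{G\times S^{1}}(M\times\BC)$ directly, which sidesteps $\overline{M}\reda{0}G$ entirely.
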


\begin{proof}
	Let $\overline{M \red G}$ denote the cut compactification of $M \red G$ as in Definition~\ref{def:cut_compactification}.
	By construction, the inclusion map $M \red G \hookrightarrow \overline{M \red G}$ can be regarded as
	\begin{equation}
		\bigl( M \times \BC^{*} \bigr) \red \bigl( G \times S^{1} \bigr) \hookrightarrow \bigl( M \times \BC \bigr) \red \bigl( G \times S^{1} \bigr) \, ,
	\end{equation}
	from which we obtain a commutative diagram
	\begin{equation} \label{diagram-equivariant-surjectivity} \nonumber
	  \begin{tikzpicture}[baseline=(current bounding box.center)]
	    \matrix (m) [matrix of math nodes,row sep=3em,column sep=3em,minimum width=2em]
	    {
	      H^\ast_{G \times S^1}(M \times \BC) & H^\ast(\overline{M \red G}) \\
	      H^\ast_{G \times S^1}(M \times \BC^\ast) & H^\ast(M \red G) \\
	    };
	    
	    \path[-stealth] (m-1-1) edge (m-1-2);
	    \path[-stealth] (m-2-1) edge (m-2-2);
	    \path[-stealth] (m-1-1) edge (m-2-1);
	    \path[-stealth] (m-1-2) edge (m-2-2);
	
	  \end{tikzpicture}
	\end{equation}
	Since $M$ is circle compact, in particular the Hamiltonian of the $S^{1}$-action is proper, and therefore so is the full $G \times S^{1}$-moment map on $M \times \BC$.
	Therefore, the top horizontal map in the diagram is surjective by usual Atiyah-Bott-Kirwan theory.
	Since the right vertical arrow is surjective by Lemma~\ref{lemma-restriction-surjectivity}, we may conclude that the bottom horizontal map is also onto.
	On the other hand, the $S^{1}$-action on $M \times \BC^{*}$ is free, so $H^{*}_{G \times S^{1}} (M \times \BC^{*}) \iso H^{*}_{G} (M)$.
	Thus $H^{*}_{G} (M) \to H^{*} (M \red G)$ is surjective.
\end{proof}

That concludes our results for symplectic manifolds; starting with the next section, we will be requiring additional structure on $M$.

\section{Lefschetz-Sommese theorems for actions of semi-linear type}
\label{sec:restrictions}

The next step in our program is to study the effect induced on cohomology, both ordinary and equivariant, by the operation of restricting to vanishing loci of appropriate functions or sections.
This is reminiscent of the Sommese Theorem and the Lefschetz Hyperplane Theorem~\cite{Sommese1977,Lefschetz24,Bott,Lazarsfeld2004II}.
For later reference, let us recall their precise statements.

\begin{theorem}[Sommese~\cite{Sommese1977,Lazarsfeld2004II}]
	\label{thm:Sommese}
	Suppose $X$ is a non-singular projective variety of dimension $n$, $E \to X$ an ample vector bundle of rank $e$.
	If $\sigma$ is a section of $E$ and $Z = \sigma^{-1} (0)$, then
	\begin{equation}
		H^{i} (X, Z; \BZ) = 0 \qquad \text{for $i \leq n-e$.}
	\end{equation}
	In particular, the restriction map $H^{i} (X; \BZ) \to H^{i} (Z; \BZ)$ is an isomorphism for $i < n - e$ and an embedding for $i = n - e$.
\end{theorem}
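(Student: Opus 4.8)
The plan is to reduce the ample vector bundle statement to the classical Lefschetz hyperplane picture for an ample line bundle by projectivising $E$, and then to treat the line bundle case by combining Poincar\'e--Lefschetz duality with the Andreotti--Frankel bound on the homotopy type of an affine variety; this is essentially the argument of \cite{Lazarsfeld2004II}.

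First I would pass to $\BP := \BP(E)$, the bundle of one-dimensional quotients of $E$, with projection $\pi \colon \BP \to X$, so that $N := \dim_{\BC} \BP = n + e - 1$ and the relative hyperplane bundle $L := \CO_{\BP}(1)$ is ample since $E$ is. Under the identification $H^{0}(X, E) \cong H^{0}(\BP, L)$ the section $\sigma$ corresponds to a section $\tilde{\sigma}$ of $L$; set $\tilde{Z} := \tilde{\sigma}^{-1}(0)$. At a point of $\BP$ lying over $x \in X$ and represented by a quotient $q \colon E_{x} \twoheadrightarrow \BC$, the section $\tilde{\sigma}$ takes the value $q(\sigma(x))$, so over $X \setminus Z$ the map $\BP \setminus \tilde{Z} \to X \setminus Z$ is a Zariski-locally-trivial affine bundle with fibre $\BC^{e-1}$ (the affine space of quotients $q$ with $q(\sigma(x)) = 1$), hence in particular a homotopy equivalence. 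Replacing $\sigma$ by $\sigma^{\otimes k}$ for $k \gg 0$ alters neither $Z$ nor $\tilde{Z}$ as subsets, so I may assume $L$ very ample; then $\tilde{Z}$ is a hyperplane section of $\BP$ in a projective embedding and $\BP \setminus \tilde{Z}$ is a smooth affine variety of complex dimension $N$. By the Andreotti--Frankel theorem it has the homotopy type of a CW complex of real dimension at most $N$, whence $H_{j}(\BP \setminus \tilde{Z}; \BZ) = 0$ for $j \geq N + 1$, and therefore $H_{j}(X \setminus Z; \BZ) = 0$ for $j \geq n + e$.

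Finally I would invoke duality. Triangulating $X$ so that $Z$ is a subcomplex makes $(X, Z)$ a good pair, and since $X$ is compact one gets $H^{i}(X, Z; \BZ) \cong \widetilde{H}^{i}(X / Z; \BZ) \cong H^{i}_{c}(X \setminus Z; \BZ)$; moreover $X \setminus Z$ is an oriented open $2n$-manifold, so Poincar\'e duality gives $H^{i}_{c}(X \setminus Z; \BZ) \cong H_{2n - i}(X \setminus Z; \BZ)$, which vanishes as soon as $2n - i \geq n + e$, i.e.\ $i \leq n - e$. That is the asserted vanishing of $H^{i}(X, Z; \BZ)$, and the statement about the restriction $H^{i}(X; \BZ) \to H^{i}(Z; \BZ)$ then falls out of the long exact sequence of the pair: it is an isomorphism once $H^{i}(X, Z; \BZ)$ and $H^{i+1}(X, Z; \BZ)$ both vanish, i.e.\ for $i < n - e$, and an injection already when $H^{i}(X, Z; \BZ) = 0$, i.e.\ for $i = n - e$. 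The part I expect to require the most care is the first reduction: fixing the projectivisation conventions so that $\CO_{\BP(E)}(1)$ is genuinely ample, and correctly identifying $\BP \setminus \tilde{Z}$ as an affine bundle over $X \setminus Z$, together with the (standard but essential) appeal to Andreotti--Frankel; the duality step is routine, but it is cleanest to route it through the smooth manifold $X \setminus Z$ rather than through the possibly singular $Z$.
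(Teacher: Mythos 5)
This is a background theorem that the paper states without proof, citing Sommese and Lazarsfeld, and your argument is a correct reconstruction of the standard proof from those references: projectivise so that ampleness of $E$ becomes ampleness of $\CO_{\BP(E)}(1)$, identify $\BP(E) \setminus \tilde{Z} \to X \setminus Z$ as an affine bundle, apply Andreotti--Frankel to the affine complement, and dualise through the smooth open $2n$-manifold $X \setminus Z$ rather than through the possibly singular $Z$. Two cosmetic points only: the power you raise to should be $\tilde{\sigma}^{\otimes k} \in \Gamma\bigl(\BP(E), L^{\otimes k}\bigr)$ rather than $\sigma^{\otimes k}$ (the zero set is unchanged either way), and the degenerate case $\sigma \equiv 0$, where $Z = X$ and the claim is vacuous, should be set aside at the outset so that $\BP(E) \setminus \tilde{Z}$ is genuinely a nonempty affine variety.
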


\begin{theorem}[Lefschetz Hyperplane~\cite{Bott}]
	\label{thm:LHT}
	Suppose $X$ is a compact complex manifold of dimension $n$, $E \to X$ a positive holomorphic line bundle.
	If $\sigma$ is a section of $E$ intersecting $0$ transversely and $Z = \sigma^{-1} (0)$, then
	\begin{equation}
		H^{i} (X, Z; R) = 0 \qquad \text{for $i \leq n-1$}
	\end{equation}
	for every coefficient ring $R$, and the restriction map $H^{i} (X; R) \to H^{i} (Z; R)$ is an isomorphism for $i < n - 1$ and an embedding for $i = n - 1$.
\end{theorem}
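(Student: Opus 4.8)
The plan is to reduce the vanishing $H^{i}(X, Z; R) = 0$ for $i \leq n-1$ to a statement about the complement $U \coloneqq X \setminus Z$, and then read off the assertions about the restriction map from the long exact sequence of the pair $(X, Z)$. First I would record the duality. Since $\sigma$ meets the zero section transversely, $Z$ is a closed complex submanifold of real codimension $2$; in particular $Z \into X$ is a closed cofibration, so comparing the long exact sequence of the pair with that of compactly supported cohomology---using $H^{*}_{c} = H^{*}$ on the compact spaces $X$ and $Z$, and the five lemma---gives $H^{*}(X, Z; R) \iso H^{*}_{c}(U; R)$. As $U$ is a complex, hence canonically oriented, manifold of real dimension $2n$, Poincaré duality yields $H^{k}_{c}(U; R) \iso H_{2n-k}(U; R)$ for every coefficient ring $R$. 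Combining, $H^{i}(X, Z; R) \iso H_{2n-i}(U; R)$.

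The geometric heart of the argument is then the claim that $U$ has the homotopy type of a CW complex of (real) dimension at most $n$; granting it, $H_{2n-i}(U; R) = 0$ whenever $2n - i > n$, i.e.\ for $i \leq n-1$, which is exactly what we want. To prove the claim I would exploit the positivity of $E$: fix a Hermitian metric $h$ on $E$ with positive curvature and set $\varphi \coloneqq -\log \| \sigma \|_{h}^{2}$ on $U$. Away from the zeros of $\sigma$, the Poincaré--Lelong formula identifies $\sqrt{-1}\, \partial \bar{\partial} \varphi$ with the Chern curvature form of $(E, h)$, which is positive because $E$ is; hence $\varphi$ is strictly plurisubharmonic on $U$. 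Moreover $\varphi \geq -\log \max_{X} \| \sigma \|_{h}^{2}$ and each sublevel set $\{ \varphi \leq c \} = \{ \| \sigma \|_{h}^{2} \geq e^{-c} \}$ is closed in the compact $X$, hence compact; so $\varphi$ is a proper, bounded-below, strictly plurisubharmonic exhaustion of $U$ (in particular $U$ is Stein, by Grauert). By the standard Andreotti--Frankel argument (see Milnor, \emph{Morse Theory}, \S7), $\varphi$ may be $C^{2}$-approximated by a Morse function retaining these properties---strict plurisubharmonicity being an open condition---and positivity of the Levi form forces each critical point to have Morse index $\leq n$: if the real Hessian is negative definite on a subspace $W$, then $W \cap JW = 0$, so $2 \dim W \leq 2n$. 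Exhausting $U$ by the compact manifolds-with-boundary $\{ \varphi \leq c \}$ and attaching handles of index $\leq n$ across each critical level then produces the desired CW model.

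Finally, the statements about the restriction map follow formally. From the long exact sequence
\[ \cdots \to H^{i}(X, Z; R) \to H^{i}(X; R) \to H^{i}(Z; R) \to H^{i+1}(X, Z; R) \to \cdots \]
together with the vanishing just proved: for $i < n-1$ both flanking terms vanish, so $H^{i}(X; R) \to H^{i}(Z; R)$ is an isomorphism; for $i = n-1$ the term $H^{n-1}(X, Z; R)$ vanishes, so the restriction is injective.

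The step I expect to be the main obstacle is the passage ``$\varphi$ proper, bounded below, strictly plurisubharmonic'' $\Rightarrow$ ``$U \simeq$ an $n$-dimensional CW complex'': one must perturb $\varphi$ to a Morse function without destroying properness, boundedness below, or the index bound (routine, but requiring the perturbation to be $C^{2}$-small uniformly along the non-compact exhaustion), and then pass to the homotopy colimit of the sublevel sets as $c \to \infty$ while keeping control of the cell structure in the limit. Everything else---the two duality isomorphisms and the long-exact-sequence bookkeeping---is formal. It is worth noting that this argument uses neither the Kähler nor the projectivity hypothesis: all that is needed is the positive line bundle furnishing the exhaustion $\varphi$.
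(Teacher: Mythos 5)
The paper does not prove this statement: it is recalled as a classical result and attributed to Bott's \emph{On a theorem of Lefschetz}, so there is no in-text argument to compare against. Your proposal is the standard Andreotti--Frankel/Bott proof --- pass to the complement $U = X \setminus Z$ via $H^{i}(X,Z;R) \cong H^{i}_{c}(U;R) \cong H_{2n-i}(U;R)$, exhibit the proper, bounded-below, strictly plurisubharmonic exhaustion $\varphi = -\log \lVert \sigma \rVert_{h}^{2}$ coming from positivity of $E$, bound the Morse indices by $n$, and read off the restriction statements from the long exact sequence of the pair --- and it is correct, including the observation that neither projectivity nor the K\"ahler condition is needed, only the positive metric on $E$. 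The technical point you flag (perturbing $\varphi$ to a Morse function while preserving properness and the index bound, and passing to the limit of sublevel sets) is indeed where the work lies, but it is handled exactly as in Milnor's \emph{Morse Theory}, \S 7, since each sublevel set is compact; alternatively one can avoid building a full CW model by noting that homology commutes with the direct limit of the sublevel sets, each of which carries only cells of index at most $n$.
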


Although the compactness condition is generally broken in our setting, we shall overcome this difficulty by using cut compactifications, working equivariantly, or both.
Furthermore, it turns out that almost complex structures are enough for some of our results, without requiring integrability.

Before we start, a few definitions are in order.
Henceforth we will assume $(M,\omega)$ to be endowed with a compatible almost complex structure $I$ and a Hamiltonian isometric $S^{1}$-action.
In this case, if the manifold is circle compact then the cut compactification $\overline{M}$ is itself an almost Kähler manifold, and if $I$ is integrable then $D_{M}$ becomes a divisor in the usual sense.

We will also assume $M$ to be decorated in the following sense.

\begin{definition}
	\label{def:decorated}
	We call $M$ a \emph{decorated manifold} if it is endowed with a holomorphic function $s$ valued in a vector space $V$, homogeneous of some positive degree $d$ under the $S^{1}$-action, which is to say that
	\begin{equation}
		s(tx) = t^{d} s(x)
	\end{equation}
	for all $t \in S^{1}$ and $x \in M$.
	If $G$ is a compact Lie group, we will say that $M$ is a \emph{decorated $G$-manifold} if it is equipped with a Hamiltonian $G \times S^{1}$-action preserving $s$ and the almost Kähler structure.
	We will denote $\mu = (\mu_{G}, \mu_{S^{1}})$ the corresponding moment map.
\end{definition}

Note that, if $M$ is circle compact under the $S^{1}$-action, $s$ does not descend to a function on the cut compactification, but rather to a section of an appropriate vector bundle $E$.
Upon further inspection, when the terminology makes sense $E$ is isomorphic to $\underline{V} \otimes \CO_{\overline{M}} (D_{M})^{d}$, where $\underline{V}$ is the trivial vector bundle with fibre $V$.

\begin{definition}
	\label{def:regular_decorated}
	We will say that a decorated $G$-manifold is
	\begin{itemize}
		\item \emph{$s$-regular} if $0 \in V$ is a regular value for $s$;
		\item \emph{$G$-regular} if $0 \in \fgd$ is a regular value for $\mu_{G}$ and the $G$-action is free on $\mu_{G}^{-1} (0)$;
		\item \emph{hyper-regular} if it is both $s$- and $G$-regular, and moreover $\mu^{-1}_{G}(0) \pitchfork s^{-1} (0)$.
	\end{itemize}
	If $M$ is $G$-regular, we will denote $\check{\mu}_{S^{1}}$ and $\check{s}$ the induced $S^{1}$ moment map and holomorphic function on the almost Kähler manifold $M \red G$.
	If it is hyper-regular, we will call
	\begin{equation}
		M \rred G \coloneqq \bigl( \mu_{G}^{-1} (0) \cap s^{-1} (0) \bigr) / G \, .
	\end{equation}
	the \emph{hyper-reduction} of $M$.
\end{definition}

As a degenerate case, every decorated manifold is $G$-regular for the action of the trivial group $G = \{1\}$, hyper-regular if and only if $s$-regular.

\begin{definition}
	\label{def:linear_type}
	We will say that the $G \times S^{1}$-action (or simply the $G$-action for brevity) on a $G$-regular decorated $G$-manifold is
	\begin{itemize}
		\item of \emph{semi-linear type} if both $M$ and $M \red G$ are circle compact for their respective $S^{1}$-actions;
		\item of \emph{linear type} if moreover the almost complex structure $I$ is integrable, the circle compactification $\overline{M}$ is smooth, and the line bundle $\CO_{\overline{M}} (D_{M})$ is ample.
	\end{itemize}
\end{definition}

In the degenerate case of $G = \{1\}$, the condition of semi-linear type reduces to circle compactness of $M$.
We also stress that, in the hyper-regular semi-linear case, one can show that $M \rred G$ is circle compact by using that it sits inside $M \red G$ as a closed submanifold.
Furthermore, if $M$ is a circle compact Kähler manifold, $D_{M}$ smooth, and $\CO_{\overline{M}} (D_{M})$ ample, then $\overline{M}$ is a projective variety, and the same goes for $\overline{M \red G}$ and $\overline{M \rred G}$ given the appropriate data.

For some of the following applications, one or more of the conditions in these definitions may be relaxed.
For instance, in some cases it is enough to assume that $M$ is acted on by a central extension of $G$ by $S^{1}$, not necessarily trivial, with due adjustments to the equivariance properties of $s$.
In others, the regularity conditions on $\mu_{G}$ and $s$ may be weakened.
In fact, one could alternatively require that $0 \in \fgd \oplus V$ be a regular value for $\mu_{G} \oplus s$ and that $G$ act freely on $(\mu_{G} \oplus s)^{-1} (0)$.
This alone would be enough to guarantee that the hyper-reduction is a well defined smooth manifold carrying a natural almost Kähler structure, but in our assumptions we also have smoothness of $Z \coloneqq s^{-1} (0)$ and $M \red G$.
The transversality condition then implies that $0$ is a regular value for both $\mu_{G} \vert_{Z}$ and $\check{s}$, so $M \rred G$ can be realised equivalently as either $Z \red G$ or $\check{s}^{-1} (0)$.
Regardless, we shall adhere to the definitions given above, for the sake of simplicity.

The following result will serve as a justification for our definition.

\begin{theorem}
	Suppose $G$ is acting linearly on $\BC^{n}$ and consider the naturally induced tri-hamiltonian action on $M \coloneqq T^{*} \BC^{n}$.
	Consider $M$ as a Kähler manifold with the complex structure corresponding to the identification with $\BC^{2n}$, and for $V \coloneqq \fgd_{\BC}$ let $s \colon M \to V$ be the complex moment map of the $G$-action.
	Under the natural $S^{1}$-action on $\BC^{2n}$, $s$ is $2$-homogeneous, $M$ is circle compact, the compactification $\overline{M}$ is smooth, and the line bundle $\CO_{\overline{M}} (D_{M})$ is ample.
	Furthermore, if $\gamma \in \fgd$ is a regular value for $\mu_{G}$ and $M \reda{\gamma} G$ is smooth, then the latter is also circle compact.
\end{theorem}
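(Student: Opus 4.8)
The plan is to check each asserted property, the one requiring real work being circle compactness of the reduction. I would begin by fixing a unitary representation $\rho$ of $G$ on $\BC^{n}$ --- legitimate since $G$ is compact --- and identifying $M = T^{*}\BC^{n}$ with $\BC^{2n} = \BC^{n} \oplus (\BC^{n})^{*}$ endowed with its flat K\"ahler structure, so that the cotangent-lifted $G$-action is linear and unitary and the ``natural'' $S^{1}$ acts by scalar multiplication. Three short computations then finish this part: (i) the complex moment map $s(z,w)(\xi) = \inner{w, \rho(\xi) z}$ is bilinear in $(z,w)$, hence $s(tz, tw) = t^{2} s(z,w)$, so $s$ is $2$-homogeneous; (ii) the $S^{1}$-moment map $\mu_{S^{1}}(v) = \tfrac12 \normsq{v}$ is proper, bounded below, and has fixed locus $\{0\}$, so $M$ is circle compact; (iii) the cut compactification of Definition~\ref{def:cut_compactification}, being the symplectic reduction of $\BC^{2n+1}$ by the diagonal scalar circle at a positive level, is $\overline{M} \iso \BC P^{2n}$, which is smooth, with boundary divisor $D_{M} \iso \BC P^{2n-1}$ a hyperplane and $\CO_{\overline{M}}(D_{M}) \iso \CO_{\BC P^{2n}}(1)$ ample. (In particular this already shows $M$ is of linear type for $G = \{1\}$.)

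For the reduction, write $N \coloneqq M \reda{\gamma} G$ with $\gamma$ a regular value of $\mu_{G}$ and $N$ smooth. Since $\mu_{G}$ is homogeneous quadratic and $\norm{t} = 1$ on $S^{1}$, the scalar circle satisfies $\mu_{G}(tv) = \mu_{G}(v)$, so it preserves $\mu_{G}^{-1}(\gamma)$ and descends to $N$ with moment map $\check{\mu}_{S^{1}}$, the descent of $\tfrac12\normsq{\cdot}$. Two of the three conditions of Definition~\ref{def:cc} are then immediate: $\check{\mu}_{S^{1}} \geq 0$ is bounded below, and it is proper because for each $c > 0$ the set $\mu_{G}^{-1}(\gamma) \cap \{\normsq{v} \leq 2c\}$ is closed and bounded in $\BC^{2n}$, hence compact, so its continuous image $\check{\mu}_{S^{1}}^{-1}([0,c])$ in $N$ is compact.

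The remaining point, compactness of $N^{S^{1}}$, is the crux, and it is where linearity is genuinely needed. I would use the Kempf--Ness description of $N$ as a GIT quotient of $\BC^{2n}$ by $G_{\BC}$: since scalar multiplication commutes with $G_{\BC}$, it induces a holomorphic $\BC^{*}$-action on $N$ restricting to the $S^{1}$-action above, together with a $\BC^{*}$-equivariant projective ``affinisation'' morphism $\pi \colon N \to N_{0} \coloneqq \operatorname{Spec}\BC[\BC^{2n}]^{G_{\BC}}$. The invariant ring is non-negatively graded by the $\BC^{*}$-weight, with degree-zero part equal to $\BC$ (the only invariant polynomials of degree $0$ are constants), so $N_{0}$ is an affine cone with a unique $\BC^{*}$-fixed point $o$. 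Since $N^{S^{1}} = N^{\BC^{*}}$ for a holomorphic $\BC^{*}$-action, and $\pi$ is equivariant, $N^{S^{1}}$ lies in the fibre $\pi^{-1}(o)$, which is complete, hence $N^{S^{1}}$ is compact; together with the previous paragraph this shows $N$ is circle compact.

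The main obstacle is making this last argument precise: one has to set up the Kempf--Ness identification and the $\BC^{*}$-equivariance of $\pi$ with care, handling a non-integral level $\gamma$ (reducing to a lattice point via the shifting trick) and the possibility of finite rather than trivial stabilisers (the $\operatorname{Proj}$ and properness arguments carry over to the resulting orbifold). If one prefers a purely symplectic route, an alternative is to observe that an $S^{1}$-fixed point of $N$ is a $G$-orbit with $S^{1} \cdot v \subseteq G \cdot v$; finiteness of stabilisers on $\mu_{G}^{-1}(\gamma)$ then yields a cocharacter $\lambda$ of $G$ with $\lambda(t) v = t v$, confining $v$ to a fixed linear subspace, after which a rescaling argument bounds $\normsq{v}$ on $N^{S^{1}}$. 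Either way the rest is routine.
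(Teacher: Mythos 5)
Your computations for the homogeneity of $s$, the circle compactness of $M$, and the identification $\overline{M} \iso \BP^{2n}$ with $D_{M} \iso \BP^{2n-1}$ and $\CO_{\overline{M}}(D_{M}) \iso \CO_{\BP^{2n}}(1)$ coincide with the paper's. Where you genuinely diverge is the crux, the compactness of $(M \reda{\gamma} G)^{S^{1}}$. The paper argues topologically: since $\check{\mu}_{S^{1}}$ is proper, compactness of the fixed locus is equivalent to its having only finitely many connected components; infinitely many components would, by Morse theory with $\check{\mu}_{S^{1}}$, force some $H^{i}(M \reda{\gamma} G)$ to have infinite rank, contradicting ordinary Kirwan surjectivity, which exhibits $H^{*}(M \reda{\gamma} G)$ as a quotient of the degreewise finite-dimensional ring $H^{*}(BG)$. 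Your route is the Nakajima-style algebro-geometric one: the projective, $\BC^{*}$-equivariant affinisation morphism $\pi \colon N \to \operatorname{Spec} \BC[\BC^{2n}]^{G_{\BC}}$ maps $N^{S^{1}} = N^{\BC^{*}}$ into the complete fibre over the unique fixed point of the cone. That core argument is correct and standard, and it buys slightly more (the fixed locus is seen to be projective, and the same reasoning controls the entire fibre over the vertex); the paper's argument is softer, stays entirely in the symplectic/topological category, and avoids GIT altogether.

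There is, however, a loose end in your version that your proposed patch does not close: for a level $\gamma$ that is not rational (not in the image of the character/weight lattice), the GIT description of $M \reda{\gamma} G$ is simply unavailable, and the shifting trick does not help --- it replaces $M \reda{\gamma} G$ by a reduction at $0$ of $M \times \mathcal{O}_{-\gamma}$ but does not make an irrational level rational. Since the theorem allows an arbitrary regular $\gamma \in \fgd$, you would need an additional step, e.g.\ an $S^{1}$-equivariant identification of the quotients at regular values lying in the same chamber (itself not entirely routine here, as $\mu_{G}$ need not be proper), or you must restrict to rational $\gamma$. The paper's counting argument is insensitive to this issue. Finally, your sketched ``purely symplectic'' alternative is too thin to assess: producing a cocharacter $\lambda$ with $\lambda(t)v = tv$ only places $v$ in the weight-one subspace for $\lambda$, and there are in general infinitely many such cocharacters, so the asserted bound on $\normsq{v}$ over $N^{S^{1}}$ still requires a genuine argument.
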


In other words, $M$ is a decorated $G$-manifold, and up to the regularity conditions the action is of linear type.

\begin{proof}
	Since the origin is the only fixed point for the $S^{1}$-action, and its moment map is $\mu_{S^{1}} (x) = \frac{1}{2} \lVert x \rVert^{2}$, $M$ is clearly circle compact.
	The cut compactification is $\overline{M} \iso \BP^{2n}$ with the Fubini-Study Kähler structure (possibly up to a rescaling) and boundary divisor $D \simeq \BP^{2n-1}$, the projective space at infinity.
	The line bundle $\CO_{\overline{M}} (D_{M})$ is then isomorphic to $\CO_{\BP^{2n}} (1)$, which is ample.
	
	Using coordinates $(\mathbf{x}, \mathbf{y})$ on $M \iso \BC^{n} \oplus \BC^{n}$, the $S^{1}$-action reads $t.(\mathbf{x}, \mathbf{y}) = (t \mathbf{x}, t \mathbf{y})$.
	On the other hand, the complex symplectic form is
	\begin{equation}
		\omegac = \sum_{i = 1}^{n} \dif x_{i} \wedge \dif y_{i} \, ,
	\end{equation}
	so that clearly $t^{*} \omegac = t^{2} \omegac$.
	It follows that $s$ is also $2$-homogeneous as claimed, being determined by $\omegac$ and the $S^{1}$-invariant $\fg$-action.
	
	The only remaining condition to check is that $(M \reda{\gamma} G)^{S^{1}}$ is compact, since properness and boundedness descend immediately from $M$.
	By properness, this is equivalent to showing that the fixed point set has only finitely many connected components.
	However, Morse theory with $\mu_{S^{1}}$ implies that if there were infinitely many connected components then $H^{i} (M \reda{\gamma} G)$ would have infinite rank for some $i$.
	On the other hand, ordinary Kirwan surjectivity implies that $H^{*} (M \reda{\gamma} G)$ is a quotient of $H^{*} (BG)$, so it is finite-dimensional in every degree.
\end{proof}

\subsection{Equivariant Lefschetz-Sommese theorem}

We are now ready to state our first result about restriction maps.

\begin{theorem}
	\label{thm:equivariant_sommese}
	If $M$ is a circle compact $s$-regular decorated $G$-manifold, then the inclusion $s^{-1}(0) \hookrightarrow M$ induces an isomorphism
	\begin{equation}
		H^{*}_{K} (M; R) \to H^{*}_{K} (s^{-1} (0); R)
	\end{equation}
	for every subgroup $K < G \times S^{1}$ and every coefficient ring $R$.
\end{theorem}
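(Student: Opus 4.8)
\emph{Strategy.} The plan is to prove the stronger statement that the inclusion $\iota\colon s^{-1}(0)\hookrightarrow M$ is a $(G\times S^{1})$-equivariant deformation retract. Such a retraction is automatically $K$-equivariant for every closed subgroup $K<G\times S^{1}$ and induces a homotopy equivalence of Borel constructions, hence an isomorphism on $H^{*}_{K}(-;R)$ for every coefficient ring $R$; so this one statement yields the theorem in full. (In contrast with the classical Sommese and Lefschetz bounds, the isomorphism holds in \emph{all} degrees, because homogeneity of positive degree should make $Z\coloneqq s^{-1}(0)$ a deformation retract of $M$ rather than merely highly connected in it.) I would first record the routine structural facts: by $s$-regularity $Z$ is a smooth closed $(G\times S^{1})$-invariant submanifold with $ds$ surjective along it, so $N_{Z}\cong\underline{V}$ equivariantly with $S^{1}$ acting on the fibre by weight $d$; and homogeneity forces $M^{S^{1}}\subseteq Z$, since a fixed point $p$ satisfies $s(p)=s(tp)=t^{d}s(p)$ for all $t\in S^{1}$ while $d\geq1$. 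Consequently $Z^{S^{1}}=M^{S^{1}}$ is compact and $\mu_{S^{1}}|_{Z}$ is proper and bounded below, so $Z$ is itself circle compact.

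The mechanism is the downward gradient flow $\Psi_{t}$ of $\mu_{S^{1}}$ for the invariant compatible metric. Circle compactness makes the sublevel sets of $\mu_{S^{1}}$ compact, so $\Psi_{t}$ is complete for $t\geq0$ with precompact forward trajectories. The crucial computation is that, since $s$ is holomorphic and $d$-homogeneous, $-\operatorname{grad}\mu_{S^{1}}=IX_{S^{1}}$ and, using $ds(I\,\cdot)=\sqrt{-1}\,ds(\cdot)$ and $ds(X_{S^{1}})=\sqrt{-1}\,d\,s$, one gets $ds(-\operatorname{grad}\mu_{S^{1}})=-d\,s$, hence
\begin{equation*}
	s(\Psi_{t}x)=e^{-dt}\,s(x)\qquad(t\geq0).
\end{equation*}
In particular $\Psi_{t}$ preserves $Z$, each tube $T_{\epsilon}\coloneqq\{\normsq{s}\leq\epsilon\}$ is forward-invariant, and $\tau_{\epsilon}(x)\coloneqq\max\!\bigl(0,\tfrac{1}{2d}\log(\normsq{s(x)}/\epsilon)\bigr)$ is a continuous $(G\times S^{1})$-invariant function equal to the first time $\Psi$ carries $x$ into $T_{\epsilon}$. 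Thus $(x,u)\mapsto\Psi_{u\tau_{\epsilon}(x)}(x)$ is a continuous equivariant strong deformation retraction of $M$ onto $T_{\epsilon}$, fixing $Z$ pointwise.

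It then remains to retract $T_{\epsilon}$ equivariantly onto $Z$. Near $Z$ the submersion $s$ together with $N_{Z}\cong\underline{V}$ provides a $(G\times S^{1})$-equivariant local product model, in which one lands on $Z$ by contracting along the $s$-fibres; the difficulty is that $T_{\epsilon}$ is larger than such a tube and, $s$ not being proper, its $s$-fibres are not controlled far from $Z$, so this contraction must be propagated over all of $T_{\epsilon}$ by flowing once more, and the two stages must be spliced continuously and equivariantly. The clean way to organise the flow analysis is to pass to the cut compactification $\overline{M}$: there $\mu_{S^{1}}$ is genuinely Morse--Bott with critical set $M^{S^{1}}\sqcup D_{M}$, the absolute maximum being $D_{M}$, and $s$ descends to a section $\overline{s}$ of $\underline{V}\otimes\CO_{\overline{M}}(D_{M})^{\otimes d}$ whose zero locus is the cut compactification $\overline{Z}$ of $Z$, meeting $M$ in $Z$. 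The contraction identity globalises ($\overline{s}$ is covariantly contracted along $-\operatorname{grad}\mu_{S^{1}}$), so $\overline{Z}$ is normally attracting in the compact orbifold $\overline{M}$ and admits a forward-invariant equivariant tubular neighbourhood containing $M^{S^{1}}$ in its interior; on $\overline{M}$ every point of $M=\overline{M}\setminus D_{M}$ enters such a neighbourhood in finite time, bounded on compacta, and — arranging the flow-in stage to stay well inside $M$ — a compatible fibrewise collapse onto $Z$ finishes the construction.

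I expect essentially all of the work to sit in the last two points: producing the forward-invariant equivariant tube around $\overline{Z}$ (this is where circle compactness is genuinely used, to make the Morse theory take place on a compact space) and the compatible fibrewise collapse, together with the bookkeeping needed to splice everything into a single continuous equivariant homotopy. The reductions of the first paragraph and the exponential-contraction identity of the second are short, and once the retraction of $M$ onto $Z$ is in hand the statement for every $K<G\times S^{1}$ and every coefficient ring $R$ is immediate.
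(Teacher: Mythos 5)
Your first stage is correct, and the exponential identity is a genuinely nice observation: since $\nabla\mu_{S^{1}}=I\xi_{S^{1}}$, $ds$ is $\BC$-linear and $ds(\xi_{S^{1}})=\sqrt{-1}\,d\,s$, one gets $s(\Psi_{t}x)=e^{-dt}s(x)$ globally along the downward flow (with consistent sign conventions), which is sharper than the fact the paper actually uses, namely that the gradient is tangent to $Z=s^{-1}(0)$. Together with forward-completeness from circle compactness, this does give an equivariant strong deformation retraction of $M$ onto the tube $T_{\epsilon}=\{\normsq{s}\le\epsilon\}$, and your preliminary reductions ($M^{S^{1}}\subseteq Z$, circle compactness of $Z$, reduction of the $K$-equivariant statement to an equivariant map which is a homotopy equivalence) are fine.

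The gap is that the entire content of the theorem now sits in your second stage --- retracting $T_{\epsilon}$ equivariantly onto $Z$ --- and that stage is only a plan, with obstacles you do not resolve. The decay of $s$ controls the \emph{value} of $s$, not the distance to $Z$: away from a neighbourhood where $s$ is a submersion with controlled derivative, $\normsq{s}\le\epsilon$ says nothing about proximity to $Z$, so $T_{\epsilon}$ is not a tubular neighbourhood and there is no \emph{a priori} reason it retracts onto $Z$; proving that it does is essentially the theorem itself. Your proposed fix via the cut compactification does not work in the stated generality: this theorem makes no freeness assumption on the $S^{1}$-action at the cut level, so $\overline{M}$ is in general only an orbifold and $\overline{s}$ only a section ``when the terminology makes sense''; transversality of $\overline{s}$ along $D_{M}$ is itself a nontrivial claim which the paper establishes only in the integrable, smooth-compactification case (Theorem~\ref{thm:newLHT}), not for almost Kähler $M$; and ``$\overline{Z}$ is normally attracting'' is asserted, not proved --- near $D_{M}$, which is the \emph{maximum} of the Morse function, the behaviour of the downward flow transverse to $\overline{Z}$ inside $D_{M}$ needs a separate argument, as does the splicing of the flow-in homotopy with a fibrewise collapse. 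Note also that an equivariant deformation retraction is strictly stronger than what is needed or what the paper proves. The paper avoids any retraction: it runs Morse--Bott theory with $\mu_{S^{1}}$ on $M$ and on $Z$ simultaneously, checks that the critical sets coincide and lie in $Z$, that the negative normal bundles (hence indices) agree because the normal weights to $Z$ at fixed points equal $d>0$, that the gradient flows and attaching data are compatible with the inclusion, and then compares the two long exact sequences of the Morse strata via excision onto the common pair $(D\CC,\partial D\CC)$, the Five Lemma, and induction over the finitely many critical components --- an argument that needs only cohomology and immediately works for every $K<G\times S^{1}$ and every coefficient ring $R$. To complete your proof you would either have to carry out the tube construction in full (including the orbifold and boundary-transversality issues), or fall back on a cellwise comparison of $M$ and $Z$, which is the paper's route.
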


In particular, choosing $K = \{1\}$ or $K = S^{1}$ gives equivalences in ordinary and $S^{1}$-equivariant cohomology.
Note that, unlike the Sommese Theorem and the Lefschetz Hyperplane Theorem, our result states that the restriction map is an isomorphism in all degrees, something which in the projective case would be obstructed by Poincaré duality.
In our setting, however, $M$ is necessarily non-compact, otherwise $s$ would be constant and therefore vanish everywhere by $S^{1}$-homogeneity, which would violate the $s$-regularity condition.

\begin{proof}
	Throughout this proof, we shall call $Z \coloneqq s^{-1} (0)$ for simplicity.

	Notice first of all that $H^{*}_{K} (Z; R)$ makes sense since $Z$ is preserved by the $G \times S^{1}$-action.
	The proof will use Morse-Bott theory with the moment map $\mu_S^{1}$, and it will be structured as follows.
	First, we will show that $M^{S^{1}} \subseteq Z$, i.e. the critical locus of $\mu_{S^{1}}$ is contained in $Z$.
	Second, we will argue that, at every fixed point $x \in M^{S^{1}}$, the restriction of the isotropy $S^{1}$-action to the normal space $N_{x} Z$ has positive weights, and therefore, the indices of $\mu_{S^{1}}$ are the same whether computed on $M$ or on $Z$.
	Next, we will discuss how, for every critical component $\CC$ of $\mu_{S^{1}}$, the immersions of the associated disc bundle $D\CC$ into $M$ and $Z$ commute with the inclusion $Z \hookrightarrow M$.
	Moreover, we will see that if $M_{\CC}^{\pm}$ and $Z_{\CC}^{\pm}$ denote the associated super- and sub-level sets then the homotopy equivalences $M_{\CC}^{+} \iso M_{\CC}^{-} \cup D\CC$ and $Z_{\CC}^{+} \iso Z_{\CC}^{-} \cup D\CC$ are $(G \times S^{1})$-equivariant.
	In the final part of the proof, we will tie the pieces together and use induction to conclude that the inclusion maps induce isomorphisms at all levels in the Thom-Gysin sequences arising from the Morse stratification.
	
	For the first step of our proof, note that for every $x \in M^{S^{1}}$ and $t \in S^{1}$ we have that
	\begin{equation}
		s(x) = s(tx) = t^{d} s(x)
	\end{equation}
	since $s$ is $d$-homogeneous for $S^{1}$, and therefore $s(x) = 0$, i.e. $x \in Z$.
	
	For the second part, suppose that $x \in M^{S^{1}}$.
	The induced isotropy actions on $T_{x} M$ and $T_{x} Z$ are then linear with respect to $I$, so we obtain a short exact sequence of complex $S^{1}$-representations
	\begin{equation}
		0 \longrightarrow T_{x} Z \longrightarrow T_{x} M \longrightarrow N_{x} Z \longrightarrow 0 \, .
	\end{equation}
	Notice now that if $\xi \in T_{x} M$ is a vector of weight $w$ for the isotropy action, then
	\begin{equation}
		t^{w} \dif[x] s (\xi) = \dif[x] s (t^{w} \xi) = \dif[x] s (t_{*} \xi) = \bigl(t^{*} \dif[x] s \bigr) (\xi) = \bigl(\dif[x] (t^{*} s)\bigr) (\xi) = t^{d} \dif[x] s(\xi) \, ,
	\end{equation}
	so $w = d$ unless $\dif[x] s(\xi) = 0$.
	Since $d$ is positive and the weights of this action are the eigenvalues of the Hessian of $\mu_{S^{1}}$, it follows that the latter is positive definite on $N_{x} Z$.
	Thus, the negative subspace of $T_{x} M$ must sit inside $T_{x} Z$, and therefore $\mu_{S^{1}}$ has the same index on $T_{x} Z$ as on $T_{x} M$.
	
	We now turn our attention to our third claim, which is that the attachment maps from the Morse stratifications of $M$ and $Z$ are compatible with the inclusion---to that end we consider the gradient flows defined by $\mu_{S^{1}}$ on $M$ and $Z$.
	On the one hand, at every point $x \in Z$ the gradient vector $\nabla_{x}^{Z} \mu_{S^{1}}$ is a priori the orthogonal projection of $\nabla_{x}^{M} \mu_{S^{1}}$ onto $T_{x} Z$.
	On the other hand, $\nabla^{M} \mu_{S^{1}}$ can be written as $I \xi_{S^{1}}$, where $\xi_{S^{1}}$ is the vector field generating the $S^{1}$-action.
	Since $Z$ is an almost complex submanifold preserved by $S^{1}$, it follows that $\nabla_{x}^{M} \mu_{S^{1}}$ is tangent to $Z$, and therefore $\nabla_{x}^{Z} \mu_{S^{1}} = \nabla_{x}^{M} \mu_{S^{1}}$.
	Moreover, by the circle compactness assumption $\mu_{S^{1}}$ is bounded below and proper on $M$, and therefore the steepest descent flow is forward-complete.
	We then obtain Morse stratifications on $M$ and $Z$, and for every critical component $\CC$ the gradient flow lines emanating from $\CC$ lie in $Z$ at all times.
	This shows that the embeddings of the corresponding disc bundle $D\CC$ into $M$ and $Z$ are compatible with the inclusion $Z \hookrightarrow M$, as claimed.
	To verify our statement about equivariance of the homotopy equivalences, notice that, for every element $a \in \fg \oplus \BR$ we have
	\begin{equation}
		[a^{\#}, \nabla \mu_{S^{1}}] = [a^{\#}, I \xi_{S^{1}}] = \bigl( \CL_{a^{\#}} I \bigr) \xi_{S^{1}} + I [a^{\#}, \xi_{S^{1}}] = 0 \, ,
	\end{equation}
	since $S^{1}$ is central and $G \times S^{1}$ preserves $I$.
	Thus the $(G \times S^{1})$-action commutes with the gradient flow, which is what defines the homotopy equivalences in question.
	
	We are now ready to approach the final part of our proof.
	For each critical component $\CC$ we may consider the pairs $(M^{+}_{\CC}, M^{-}_{\CC})$ and $(Z^{+}_{\CC}, Z^{-}_{\CC})$, their respective long exact sequences, and the mapping between them induced by the inclusion maps $Z_{\CC}^{\pm} \hookrightarrow M_{\CC}^{\pm}$.
	If $D\CC$ denotes the unit disc bundle in the negative normal bundle of $\CC$, the cohomology of each pair can be replaced by that of $(D\CC, \partial D\CC)$ via the usual excision argument.
	We then obtain the following as a portion of the resulting commutative diagram:
	\begin{equation*}
		\begin{tikzpicture}[baseline=(current bounding box.center)]
	    \matrix (m) [matrix of math nodes,row sep=3em,column sep=1.2em,minimum width=2em]
	    {
	     H_{K}^{*-1}(M^{-}_{\CC})  & H_{K}^{*-1} (D\CC, \partial D\CC) & H_{K}^{*}(M^{+}_{\CC})  & H_{K}^{*}(M^{-}_{\CC}) &  H_{K}^{*}(D\CC, \partial D\CC) \\
	     H_{K}^{*-1}(Z^{-}_{\CC})  & H_{K}^{*-1} (D\CC, \partial D\CC) & H_{K}^{*}(Z^{+}_{\CC})  & H_{K}^{*}(Z^{-}_{\CC}) &  H_{K}^{*}(D\CC, \partial D\CC) \\
	    };
	
	    \foreach \n [count=\np from 2] in {1,2,3,4}
	    {
	      \path[-stealth] (m-1-\n) edge (m-1-\np);
	      \path[-stealth] (m-2-\n) edge (m-2-\np);
	    };
	
	    \foreach \n in {1,2,3,4,5}
	    {
	      \path[-stealth] (m-1-\n) edge (m-2-\n);
	    };
	
	  \end{tikzpicture}
	\end{equation*}
	$H^{*}_{K}$ is understood to be taken with coefficients in $R$.
	Since $\mu_{S^{1}}$ is proper and bounded below, it has a critical component $\CC_{0}$, for which $M^{-}_{\CC_{0}} = Z^{-}_{\CC_{0}} = \emptyset$.
	The diagram then splits into a family of squares and we may conclude that $\iota^{*} \colon H^{*}_{K} (M^{+}_{\CC_{0}}) \to H^{*}_{K} (Z^{+}_{\CC_{0}})$ is an isomorphism.
	On the other hand, if for some critical component $\CC$ the first and fourth vertical maps are isomorphisms, then so is the third by the Five Lemma.
	Our conclusion then follows by induction on the critical sets, which are finitely many since the fixed locus $M^{S^{1}}$ is compact under the circle compactness assumption.
\end{proof}

\begin{corollary}
	If $M$ is a circle compact $s$-regular decorated $G$-manifold and the $G$-action is locally free on $s^{-1} (0)$, then the natural map
	\begin{equation}
		H^{*}_{G \times S^{1}} (M; R) \to H^{*}_{S^{1}} \bigl( s^{-1} (0) \slash G; R \bigr)
	\end{equation}
	is an isomorphism for every coefficient ring $R$.
\end{corollary}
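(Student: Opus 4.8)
The plan is to deduce the statement directly from Theorem~\ref{thm:equivariant_sommese} by specialising the subgroup to $K = G \times S^{1}$ and then passing from the Borel construction to the quotient.

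First I would invoke Theorem~\ref{thm:equivariant_sommese} with $K = G \times S^{1}$: the hypotheses ($M$ circle compact, $s$-regular, a decorated $G$-manifold) are precisely those of that theorem, so the inclusion $s^{-1}(0) \into M$ induces an isomorphism $H^{*}_{G \times S^{1}}(M; R) \iso H^{*}_{G \times S^{1}}(s^{-1}(0); R)$, natural in the coefficient ring $R$. Next, write $Z \coloneqq s^{-1}(0)$. Since the $S^{1}$-action commutes with $G$ it descends to $Z / G$, and since $G$ acts locally freely on $Z$ this quotient is an almost Kähler orbifold carrying an induced Hamiltonian $S^{1}$-action. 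I would then identify $H^{*}_{G \times S^{1}}(Z; R)$ with $H^{*}_{S^{1}}(Z / G; R)$: choosing $E(G \times S^{1}) = EG \times ES^{1}$, the Borel construction is $\bigl( Z \times_{G} EG \bigr) \times_{S^{1}} ES^{1}$, and the $S^{1}$-equivariant projection $Z \times_{G} EG \to Z / G$ exhibits the left-hand side as the $S^{1}$-homotopy quotient of $Z/G$. When $G$ acts freely this projection is a homotopy equivalence, so the identification is literal for every $R$; for a merely locally free action it is the definition of the $S^{1}$-equivariant cohomology of the quotient orbifold — the convention already in force elsewhere in the paper, as in $H^{*}(M \red G) \iso H^{*}_{G}(\mu^{-1}(0))$ — and rationally it follows from a Leray–Hirsch argument, the homotopy fibres being classifying spaces of finite stabilisers and hence $\BQ$-acyclic. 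Composing the two isomorphisms finishes the proof.

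The only delicate point is this last identification over an arbitrary ring $R$: when $G$ does not act freely, the fibres $B\Gamma$ of $Z \times_{G} EG \to Z/G$ (with $\Gamma$ a finite stabiliser) need not be $R$-acyclic, so $H^{*}_{G \times S^{1}}(Z; R)$ and the singular cohomology of the underlying topological space of $Z/G$ genuinely differ in general. I would handle this exactly as the paper does throughout, by reading $H^{*}_{S^{1}}(Z/G; R)$ as the equivariant (orbifold) cohomology $H^{*}_{G \times S^{1}}(Z; R)$. With that understanding the corollary is an immediate consequence of Theorem~\ref{thm:equivariant_sommese}, and no further argument is required; only in the genuinely free case does one additionally record that the identification is one of singular cohomology rings for every coefficient ring.
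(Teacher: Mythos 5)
Your proof is correct and follows essentially the same route as the paper: apply Theorem~\ref{thm:equivariant_sommese} with $K = G \times S^{1}$ and then identify $H^{*}_{G \times S^{1}}(s^{-1}(0); R)$ with $H^{*}_{S^{1}}(s^{-1}(0)/G; R)$ via the locally free $G$-action. Your additional caveat about general coefficient rings for merely locally free actions is a genuine subtlety that the paper's own two-line proof passes over in silence, and your resolution (reading the right-hand side as orbifold/equivariant cohomology) is the appropriate one.
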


\begin{proof}
	By Theorem~\ref{thm:equivariant_sommese}, the restriction map $H^{*}_{G \times S^{1}} (M; R) \to H^{*}_{G \times S^{1}} \bigl( s^{-1} (0); R \bigr)$ is an isomorphism.
	On the other hand, if the $G$-action on $s^{-1} (0)$ is locally free, then
	\begin{equation}
		H^{*}_{G \times S^{1}} (s^{-1} (0); R) \iso H^{*}_{S^{1}} \bigl( s^{-1} (0) \slash G; R \bigr) \, ,
	\end{equation}
	showing our claim.
\end{proof}

\begin{theorem}
	\label{thm:equivariant_sommese_quotient}
	Suppose $M$ is a hyper-regular decorated $G$-manifold and the action is of semi-nearnear type.
	Then the maps
	\begin{equation}
		H^{*} (M \red G ; R) \to H^{*} (M \rred G ; R) \,
		\qquad \text{and} \qquad
		H^{*}_{S^{1}} (M \red G ; R) \to H^{*}_{S^{1}} (M \rred G ; R)
	\end{equation}
	induced by the inclusion $M \rred G \hookrightarrow M \red G$ are isomorphisms for every coefficient ring $R$.
\end{theorem}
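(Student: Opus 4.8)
The plan is to deduce this from Theorem~\ref{thm:equivariant_sommese} applied not to $M$ itself but to its Kähler reduction $M \red G$, with the trivial group in place of $G$ and the descended function $\check s$ in place of the decorating function. The key observation is that $(M \red G, \check s)$ is again a circle compact $s$-regular decorated manifold whose decorating function has vanishing locus $\check s^{-1}(0) = M \rred G$; once this is checked, the cited theorem, with its subgroup $K$ ranging over $\{1\}$ and $S^{1}$, delivers both asserted isomorphisms at once.

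So the first thing I would do is verify that $M \red G$ carries exactly the structure required by Theorem~\ref{thm:equivariant_sommese}. Since $M$ is $G$-regular, $M \red G$ is a smooth almost Kähler manifold; the $S^{1}$-action on $M$ commutes with $G$, is Hamiltonian, and preserves the almost Kähler structure, hence descends to a Hamiltonian isometric $S^{1}$-action on $M \red G$ with moment map $\check\mu_{S^{1}}$, and this descended action makes $M \red G$ circle compact because the action on $M$ is of semi-linear type. The function $s$, being $G$-invariant, descends to $\check s \colon M \red G \to V$, which is holomorphic for the reduced almost complex structure and still homogeneous of the same positive degree $d$, as the $S^{1}$-action downstairs is the quotient of the one upstairs; thus $M \red G$ is a decorated manifold in the sense of Definition~\ref{def:decorated} for the trivial group. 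Finally, hyper-regularity --- that is, $s$-regularity together with $\mu_{G}^{-1}(0) \pitchfork s^{-1}(0)$ and freeness of the $G$-action on $\mu_{G}^{-1}(0)$ --- forces $0$ to be a regular value of $s\vert_{\mu_{G}^{-1}(0)}$, hence of $\check s$, so $M \red G$ is $s$-regular and $\check s^{-1}(0) = M \rred G$ is a closed almost complex submanifold of $M \red G$; this is precisely the identification recorded after Definition~\ref{def:linear_type}.

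With these verifications in hand, I would simply invoke Theorem~\ref{thm:equivariant_sommese} with $M$ replaced by $M \red G$ and $G$ replaced by the trivial group: for every subgroup $K < S^{1}$ and every coefficient ring $R$, the inclusion $M \rred G = \check s^{-1}(0) \hookrightarrow M \red G$ induces an isomorphism $H^{*}_{K}(M \red G; R) \to H^{*}_{K}(M \rred G; R)$. Specialising to $K = \{1\}$ yields the isomorphism in ordinary cohomology and to $K = S^{1}$ the one in $S^{1}$-equivariant cohomology, which is the theorem. The main obstacle I anticipate lies entirely in the first step: one must be sure that $\check s$ is genuinely holomorphic for the reduced almost complex structure, so that the ``positive weights on the normal space'' and ``gradient flow stays inside the zero locus'' parts of the proof of Theorem~\ref{thm:equivariant_sommese} transfer verbatim to $M \red G$. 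This, however, is guaranteed by the hyper-regularity hypothesis, which as noted above realises $M \rred G$ as an almost complex submanifold of the almost Kähler manifold $M \red G$, so that no further argument is needed.
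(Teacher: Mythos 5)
Your proposal is correct and follows essentially the same route as the paper: both reduce the statement to Theorem~\ref{thm:equivariant_sommese} applied to $M \red G$ viewed as a circle compact decorated manifold for the trivial group, with the bulk of the work being the verification that $\check{s}$ is holomorphic, $S^{1}$-homogeneous of degree $d$, and has $0$ as a regular value (the paper checks this last point via $G$-invariance and $\BC$-linearity of $\dif s$ on $\BC T_{x}(Gx)$, while you appeal to the transversality clause of hyper-regularity, which the paper itself notes is an equivalent justification).
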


\begin{proof}
	For convenience, call $Y = M \red G$ and $Z = \check{s}^{-1}(0) = M \rred G$.
	By the regularity assumption, $Y$ is smooth, and for every $y = [x] \in Y$ the space $T_{y} Y$ can be realised as the orthogonal complement of $T_{x} (G x)$ in $T_{x} \bigl(\mu_{G}^{-1} (0)\bigr)$ with respect to the Riemannian metric $g$.
	On the other hand, $T_{x} \bigl(\mu_{G}^{-1} (0) \bigr)$ is the orthogonal complement of $I T_{x} (Gx)$ in $T_{x} M$, so $T_{y} Y$ can equivalently be expressed as the orthogonal in $T_{x} M$ of $\BC T_{x} (Gx)$.
	
	Under the identification above, $\dif[y] \check{s}$ is the restriction of $\dif[x] s$.
	Notice now that if $s(x) = 0$ then $\dif[x] s \colon T_{x} M \to V$ is surjective, since $0 \in V$ is a regular value of $s$.
	On the other hand, $G$-invariance of $s$ implies that $\dif[x] s$ vanishes on $T_{x} (Gx)$, and therefore on $\BC T_{x} (Gx)$ by $\BC$-linearity.
	Therefore, $\dif[x] s$ remains surjective when restricted to the orthogonal complement of $\BC T_{x} (Gx)$ in $T_{x} M$, which is to say that $\dif[y] \check{s}$ is surjective on $T_{y} Y$ if $\check{s} (y) = 0$.
	In summary, $0 \in V$ is a regular value for $\check{s}$, which is clearly homogeneous for the $S^{1}$-action with the same degree as $s$.
	In summary, $Y$ itself satisfies the hypotheses of Theorem~\ref{thm:equivariant_sommese}, and the conclusion follows.
\end{proof}

The following summarises the main surjectivity result as presented in Theorem~\ref{thm:main_intro}.

\begin{theorem}
	\label{thm:main}
	If $M$ is a hyper-regular decorated $G$-manifold with an action of semi-linear type, then the induced maps
	\begin{equation}
		H^{*}_{G} (M) \to H^{*} (M \rred G)
		\qquad \text{and} \qquad
		H^{*}_{G} (s^{-1}(0)) \to H^{*} (s^{-1}(0) \red G)
	\end{equation}
	in \emph{rational} cohomology are surjective.
\end{theorem}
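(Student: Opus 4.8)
The plan is to deduce both surjectivity statements by splicing together the two results already proved in this section---Kirwan surjectivity for circle compact spaces (Theorem~\ref{thm:KirwanSurj}) and the Lefschetz--Sommese isomorphism on the reduction (Theorem~\ref{thm:equivariant_sommese_quotient})---through the factorisation of the hyper-Kirwan map recorded in the introduction. The first map in the statement is precisely $\khk$, which we write as the composite
\begin{equation*}
	H^{*}_{G}(M) \stackrel{\kappa}{\longrightarrow} H^{*}(M \red G) \stackrel{\iota^{*}}{\longrightarrow} H^{*}(M \rred G) \, ,
\end{equation*}
with $\iota \colon M \rred G \hookrightarrow M \red G$ the inclusion.

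First I would check that the pair $(M,G)$ satisfies the hypotheses of Theorem~\ref{thm:KirwanSurj}: $G$-regularity of the decorated $G$-manifold gives that $0 \in \fgd$ is a regular value of $\mu_{G}$, and in fact that $G$ acts freely on $\mu_{G}^{-1}(0)$, so $M \red G$ is a smooth manifold, while the semi-linear type hypothesis is exactly circle compactness of both $M$ and $M \red G$. The only subtle point is the smoothness of the cut compactification $\overline{M \red G}$: in general this is merely an orbifold, but over $\BQ$ this is immaterial, since the perfectness and Serre spectral sequence arguments behind Lemma~\ref{lemma-restriction-surjectivity} and Theorem~\ref{thm:KirwanSurj} remain valid for orbifolds---this is precisely why the present statement is phrased rationally. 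Thus $\kappa$ is surjective over $\BQ$. On the other hand, hyper-regularity together with the semi-linear type assumption is exactly the hypothesis of Theorem~\ref{thm:equivariant_sommese_quotient}, so $\iota^{*}$ is an isomorphism, in particular with $\BQ$ coefficients. Composing, $\khk = \iota^{*} \circ \kappa$ is surjective, establishing the first assertion.

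For the second map I would first identify $s^{-1}(0) \red G$ with $M \rred G$. Hyper-regularity makes $Z \coloneqq s^{-1}(0)$ a smooth $(G \times S^{1})$-invariant submanifold, and the transversality condition $\mu_{G}^{-1}(0) \pitchfork s^{-1}(0)$, combined with regularity of $0$ for $\mu_{G}$, ensures that $0$ remains a regular value of $\mu_{G}\vert_{Z}$ with $G$ acting freely on $\mu_{G}^{-1}(0) \cap Z$; hence $Z \red G = \bigl(\mu_{G}^{-1}(0) \cap s^{-1}(0)\bigr)/G = M \rred G$. Now consider the square whose top arrow is the restriction $H^{*}_{G}(M) \to H^{*}_{G}(Z)$ along $Z \hookrightarrow M$, whose bottom arrow is $H^{*}(M \red G) \to H^{*}(M \rred G)$, whose left arrow is $\khk$, and whose right arrow is the map $H^{*}_{G}(s^{-1}(0)) \to H^{*}(s^{-1}(0) \red G)$ in question. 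It commutes by functoriality of restriction, since the composite $H^{*}_{G}(M) \to H^{*}_{G}(Z) \to H^{*}_{G}\bigl(\mu_{G}^{-1}(0) \cap Z\bigr)$ is the restriction along $\mu_{G}^{-1}(0) \cap Z \hookrightarrow M$, which is $\khk$ under the identification $H^{*}_{G}\bigl(\mu_{G}^{-1}(0) \cap Z\bigr) \iso H^{*}(M \rred G)$. By Theorem~\ref{thm:equivariant_sommese}, applied with $K = G$ and $R = \BQ$ (its hypotheses of $s$-regularity and circle compactness being implied by ours), the top arrow is an isomorphism. Since the left arrow $\khk$ has just been shown surjective, the right arrow is surjective as well.

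Nearly all of the work is carried by the three cited theorems, so what remains is bookkeeping; the genuinely delicate point---and the reason the conclusion is stated over $\BQ$---is the orbifold issue in invoking Theorem~\ref{thm:KirwanSurj} when $\overline{M \red G}$ is only an orbifold. A secondary care is needed in identifying $s^{-1}(0) \red G$ with $M \rred G$ and in verifying that the square above commutes, so that surjectivity of $\khk$ transports along the isomorphism of Theorem~\ref{thm:equivariant_sommese}.
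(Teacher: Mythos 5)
Your proposal is correct and follows essentially the same route as the paper: the first map is handled by factoring $\khk = \iota^{*} \circ \kappa$ and citing Theorems~\ref{thm:KirwanSurj} and~\ref{thm:equivariant_sommese_quotient}, and the second by transporting surjectivity across the isomorphism $H^{*}_{G}(M) \iso H^{*}_{G}(s^{-1}(0))$ of Theorem~\ref{thm:equivariant_sommese} together with the identification $s^{-1}(0) \red G = M \rred G$. The one place you go beyond the paper is in flagging that semi-linear type does not literally supply the smoothness of $\overline{M \red G}$ required by Theorem~\ref{thm:KirwanSurj} and arguing that the orbifold case is harmless over $\BQ$ --- a gap the paper's own proof passes over in silence, so this extra care is welcome rather than a deviation.
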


\begin{proof}
	The first map is the hyper-Kirwan map $\kappa_{G,s}$, and as noted in the introduction it splits as the composition
	\begin{equation}
		H^{*}_{G} (M) \stackrel{\kappa}{\rightarrow} H^{*} (M \red G) \stackrel{\iota^{*}}{\rightarrow} H^{*} (M \rred G) \, .
	\end{equation}
	We have established that $\kappa$ is surjective in Theorem~\ref{thm:KirwanSurj} and that $\iota^{*}$ is an isomorphism in Theorem~\ref{thm:equivariant_sommese_quotient}.
	We have also shown with Theorem~\ref{thm:equivariant_sommese} that $H^{*}_{G} (M) \iso H^{*}_{G} (s^{-1}(0))$, and since $s^{-1}(0) \red G = M \rred G$ the map $H^{*}_{G} (s^{-1}(0)) \to H^{*} (s^{-1}(0) \red G)$ is essentially the same as the hyper-Kirwan map.
\end{proof}

\subsection{Additional results on the cut compactifications}

Throughout this section we have thus far only invoked the properties of the $S^{1}$-actions under the circle compactness assumption without actually involving the cut compactifications of $M$ and its reductions.
The properties of these spaces are the focus of the remaining theorems in this section, which are direct consequences of the Sommese Theorem~\cite{Sommese1977} and the Lefschetz Hyperplane Theorem~\cite{Bott}.

\begin{theorem}
	\label{thm:newSommese}
	Suppose $M$ is a decorated manifold and the $S^{1}$-action is of linear type.
	Call $\sigma \in \Gamma(\overline{M}, E)$ the section induced by $s$ and $k \coloneqq \dim_{\BC} M - \dim_{\BC} V$.
	Then the restriction map
	\begin{equation}
		H^{i} \bigl( \overline{M} \bigr) \longrightarrow H^{i} \bigl( \sigma^{-1} (0) \bigr)
	\end{equation}
	is an isomorphism for $i < k$ and injective for $i = k$.
\end{theorem}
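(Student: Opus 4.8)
The plan is to obtain this statement as a direct application of the Sommese Theorem (Theorem~\ref{thm:Sommese}) to the cut compactification $\overline{M}$. First I would unpack the linear-type hypothesis. Since $I$ is integrable and compatible with $\omega$, the manifold $M$ is Kähler; being circle compact, it has a cut compactification $\overline{M}$ which, under the linear-type assumptions, is smooth, and then the ampleness of $\CO_{\overline{M}}(D_{M})$ shows, as already recorded above, that $\overline{M}$ is a non-singular projective variety, of complex dimension $n \coloneqq \dim_{\BC}\overline{M} = \dim_{\BC}M$. Moreover $s$ extends to the section $\sigma$ of the bundle $E \cong \underline{V} \otimes \CO_{\overline{M}}(D_{M})^{d}$, which has rank $e \coloneqq \dim_{\BC}V$, so that $n-e = k$ with $k$ as in the statement.

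The one point that genuinely needs checking is that $E$ is an ample vector bundle. As $\CO_{\overline{M}}(D_{M})$ is ample and $d$ is a positive integer, the tensor power $\CO_{\overline{M}}(D_{M})^{d}$ is again ample, and $E$ is a finite direct sum of copies of this line bundle, hence ample by the usual permanence properties of ampleness (a direct sum of ample bundles is ample). It is worth stressing that no transversality or $s$-regularity of $\sigma$ is needed at this stage, since the Sommese Theorem applies to any global section of an ample bundle; those hypotheses would only be used if one also wished to identify $\sigma^{-1}(0)$ with a smooth hyper-reduction.

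It then remains only to apply Theorem~\ref{thm:Sommese} with $X = \overline{M}$, the ample bundle $E$ of rank $e$, and the section $\sigma$: this gives $H^{i}(\overline{M}, \sigma^{-1}(0); \BZ) = 0$ for $i \leq n-e = k$, hence that the restriction $H^{i}(\overline{M}) \to H^{i}(\sigma^{-1}(0))$ is an isomorphism for $i < k$ and injective for $i = k$, and the rational statement in the theorem follows at once. The hard part, such as it is, is only bookkeeping: correctly identifying the extension bundle $E$ and confirming its ampleness. Notably, no Morse theory is needed here, in contrast with the earlier results of this section, precisely because we may now argue on the compact space $\overline{M}$.
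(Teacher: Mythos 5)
Your proposal is correct and follows essentially the same route as the paper: identify $\overline{M}$ as a smooth projective variety via the linear-type hypothesis, recognize $E \cong \underline{V} \otimes \CO_{\overline{M}}(D_{M})^{\otimes d}$ as an ample vector bundle of rank $\dim_{\BC} V$, and apply the Sommese Theorem directly to the section $\sigma$. Your added remarks—spelling out why $E$ is ample via permanence properties and noting that no regularity of $s$ is needed—only make explicit what the paper leaves implicit.
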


As in the Sommese theorem, no particular regularity assumptions on $s$ are strictly required, and the locus $\sigma^{-1} (0)$ may be singular.

\begin{proof}
	Recall that, according to Definition~\ref{def:linear_type}, the action being of linear type (and not just semi-linear) implies that the complex structure on $M$ is integrable and the boundary divisor defines an ample line bundle, which makes $\overline{M}$ a projective variety.
	Furthermore, $E$ is isomorphic to $\underline{V} \otimes \CO_{\overline{M}}(D_{M})^{\otimes d}$, which is itself ample.
	Our conclusion follows then from the ordinary Sommese Theorem (see Theorem~\ref{thm:Sommese}).
\end{proof}

\begin{theorem}
	Suppose $M$ is a hyper-regular decorated $G$-manifold with an action of linear type, and call $k \coloneqq \dim_{\BC} M - \dim G - \dim_{\BC} V$.
	If the compactification $\overline{M \red G}$ is smooth, then the restriction map
	\begin{equation}
		H^{i} \bigl( \overline{M \red G} \bigr) \longrightarrow H^{i} \bigl( \overline{M \rred G} \bigr)
	\end{equation}
	is an isomorphism for $i < k$ and injective for $i = k$.
\end{theorem}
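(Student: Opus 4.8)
The plan is to recognise $Y \coloneqq M \red G$ as a decorated manifold in its own right whose $S^{1}$-action is again of linear type, and then to apply Theorem~\ref{thm:newSommese} to $Y$. By $G$-regularity, $Y$ is a smooth Kähler manifold carrying the residual Hamiltonian $S^{1}$-action with moment map $\check{\mu}_{S^{1}}$ and the holomorphic function $\check{s}$, which is homogeneous of the same positive degree $d$ as $s$ (as noted in the proof of Theorem~\ref{thm:equivariant_sommese_quotient}); hence $Y$ is a decorated manifold in the sense of Definition~\ref{def:decorated}, with associated section $\check{\sigma} \in \Gamma(\overline{Y}, E_{Y})$, where $E_{Y} \iso \underline{V} \otimes \CO_{\overline{Y}}(D_{Y})^{\otimes d}$.

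The first substantive step is to check that the $S^{1}$-action on $Y$ is of linear type (for the trivial structure group), i.e.\ that the conditions of Definition~\ref{def:linear_type} hold for $Y$: circle compactness of $Y$ is part of the semi-linear (hence linear) type hypothesis on $M$; integrability of the induced complex structure on $Y$ follows from that of $I$ on $M$, a Kähler quotient of a Kähler manifold being Kähler; and smoothness of $\overline{Y} = \overline{M \red G}$ is assumed. The remaining point, ampleness of $\CO_{\overline{Y}}(D_{Y})$, is the crux. Here I would use reduction in stages on $M \times \BC$ to identify both $\overline{M \red G}$ and $\overline{M} \red G$ with the reduction of $M \times \BC$ by $G \times S^{1}$ at level $(0,c)$; under this identification the boundary divisor $D_{Y}$ is the reduction $D_{M} \red G$ of the $G$-invariant divisor $D_{M} \subseteq \overline{M}$, so $\CO_{\overline{Y}}(D_{Y})$ is the descent of the $G$-linearised line bundle $\CO_{\overline{M}}(D_{M})$ along the $G$-quotient presenting $\overline{Y} = \overline{M} \red G$. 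Since $\CO_{\overline{M}}(D_{M})$ is ample by the linear-type hypothesis, descent of ample line bundles along GIT quotients (Kempf's descent lemma, after which a power of $\CO_{\overline{Y}}(D_{Y})$ is the very ample bundle $\CO(1)$ of the quotient) makes $\CO_{\overline{Y}}(D_{Y})$ ample; this is also the assertion, recorded after Definition~\ref{def:linear_type}, that $\overline{M \red G}$ is projective when the appropriate data are present.

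It then remains to identify $\check{\sigma}^{-1}(0)$ with $\overline{M \rred G}$. On the open stratum $Y \subseteq \overline{Y}$ the section $\check{\sigma}$ restricts, under the canonical trivialisation, to $\check{s}$, so $\check{\sigma}^{-1}(0) \cap Y = \check{s}^{-1}(0) = M \rred G$. On the boundary divisor $D_{Y}$, the twist by $\CO_{\overline{Y}}(D_{Y})^{\otimes d}$ absorbs precisely the factor $t^{d}$ produced by the $S^{1}$-homogeneity of $\check{s}$, so $\check{\sigma}$ restricts there to a section of $\underline{V} \otimes \CO_{\overline{Y}}(D_{Y})^{\otimes d}\vert_{D_{Y}}$ with zero locus $\bigl(\check{s}^{-1}(0) \cap \check{\mu}_{S^{1}}^{-1}(c)\bigr) / S^{1} = (M \rred G) \reda{c} S^{1} = D_{M \rred G}$. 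Hence $\check{\sigma}^{-1}(0) = (M \rred G) \cup D_{M \rred G}$, which is by construction the cut compactification $\overline{M \rred G}$, namely the $S^{1}$-cut of the $S^{1}$-invariant closed submanifold $M \rred G \subseteq Y$. Since $\dim_{\BC} Y = \dim_{\BC} M - \dim G$, the constant $k = \dim_{\BC} Y - \dim_{\BC} V$ agrees with that in the statement, and Theorem~\ref{thm:newSommese} applied to $Y$ yields that $H^{i}(\overline{M \red G}) \to H^{i}(\overline{M \rred G})$ is an isomorphism for $i < k$ and injective for $i = k$.

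I expect the ampleness–descent step to be the main obstacle, as it is the only place requiring more than formal bookkeeping: one must invoke Kempf's descent lemma for the linearised bundle $\CO_{\overline{M}}(D_{M})$ on the GIT quotient $\overline{M} \red G$, or an equivalent symplectic argument realising the reduced Kähler form as a rational — hence, after scaling, integral and ample — class on $\overline{M \red G}$. A secondary point needing care is that $\check{\sigma}^{-1}(0)$ has no component supported entirely on $D_{Y}$, so that it genuinely coincides with the cut compactification of $M \rred G$; this is settled by the explicit description of $\check{\sigma}\vert_{D_{Y}}$ above.
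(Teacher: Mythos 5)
Your proposal is correct and follows essentially the same route as the paper: exhibit $M \red G$ as a decorated manifold whose $S^{1}$-action is of linear type --- via the identification $\overline{M \red G} \iso \overline{M} \red G$ and descent of ampleness of $\CO_{\overline{M}}(D_{M})$ to the quotient --- and then apply Theorem~\ref{thm:newSommese}. The paper is terser at the ampleness step (it simply asserts that ampleness is ``clear'' from the identification with $\overline{M}\red G$), so your explicit appeal to descent of linearised ample bundles along the GIT quotient, and your verification that $\check{\sigma}^{-1}(0)$ is exactly the cut compactification of $M \rred G$, are welcome elaborations of the same argument rather than a different one.
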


In this case, the hyper-regularity assumption implies that $k = \dim_{\BC} \bigl(M \rred G\bigr)$.

\begin{proof}
	Call for convenience $X \coloneqq M \red G$.
	By hyper-regularity, $X$ is a smooth manifold and $\check{s}$ a holomorphic function with $0 \in V$ as a regular value.
	The action being of linear type, $X$ is also circle compact, which makes it a decorated manifold, and its almost complex structure is integrable.
	Since $\overline{M}$ is a compact Kähler manifold and $\CO_{\overline{M}}(D_{M})$ is ample, it is a projective variety, and therefore so is $\overline{M} \red G$.
	It is then clear from the identification $\overline{X} \iso \overline{M} \red G$ that $\CO_{\overline{X}} (D_{X})$ is also ample.
	
	We have shown that $X$ is a decorated manifold with $S^{1}$-action of linear type, and since $M \rred G$ is the zero locus of the section on $X$ corresponding to $s$, the result follows from Theorem~\ref{thm:newSommese}.
\end{proof}

\begin{theorem}
	\label{thm:newLHT}
	Suppose $M$ is an $s$-regular decorated Kähler manifold, circle compact for the $S^{1}$-action and with smooth compactification $\overline{M}$.
	Call $\sigma \in \Gamma(\overline{M}, E)$ the section induced by $s$ and $k \coloneqq \dim_{\BC} M - \dim_{\BC} V$.
	Then for every coefficient ring $R$ the restriction map
	\begin{equation}
		H^{i} \bigl( \overline{M}; R \bigr) \longrightarrow H^{i} \bigl( \sigma^{-1} (0); R \bigr)
	\end{equation}
	is an isomorphism for $i < k$ and injective for $i = k$.
\end{theorem}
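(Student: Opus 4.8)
The plan is to reduce the statement, by a dévissage along the boundary divisor, to the ordinary Sommese theorem on the \emph{compact} Kähler manifold $D_M$, on which the relevant line bundle becomes ample even though $\CO_{\overline{M}}(D_M)$ in general is not. Write $n\coloneqq\dim_\BC M$ and $m\coloneqq\dim_\BC V$, so $k=n-m$, and set $Z\coloneqq s^{-1}(0)$. First I would note that $Z$ is itself circle compact: its fixed locus is $Z\cap M^{S^1}=M^{S^1}$ (recall $M^{S^1}\subseteq Z$, as in the proof of Theorem~\ref{thm:equivariant_sommese}), which is compact, and $\mu_{S^1}$ restricts to $Z$ as a proper, bounded-below function. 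Hence the cut compactification $\overline{Z}$ of Definition~\ref{def:cut_compactification} is defined, and I would check that it embeds holomorphically in $\overline{M}$ as a smooth complex submanifold with $D_Z\coloneqq\overline{Z}\setminus Z\iso Z\reda{c}S^1$ a smooth divisor; smoothness follows since $c$ is a regular value of the $S^1$-moment map on $Z\times\BC$ and $S^1$ acts freely on the corresponding level (using freeness of $S^1$ on $\mu_{S^1}^{-1}(c)$, implicit in the smoothness of $\overline{M}$), equivalently since $Z$ meets $\mu_{S^1}^{-1}(c)$ transversally because $\nabla\mu_{S^1}=I\xi_{S^1}$ is tangent to $Z$ and nonvanishing on $\mu_{S^1}^{-1}(c)$, the latter being fixed-point free as $c$ exceeds all critical values. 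Finally I would identify $\sigma^{-1}(0)$ with $\overline{Z}$: over $M\subseteq\overline{M}$ one has $\sigma=s$, so the affine part of $\sigma^{-1}(0)$ is $Z$, while trivializing $E|_{D_M}$ by points of $\mu_{S^1}^{-1}(c)$ presents $\sigma|_{D_M}$ by $s$ itself, so $d$-homogeneity with $d>0$ forces its zero locus to be $\bigl(Z\cap\mu_{S^1}^{-1}(c)\bigr)/S^1=D_Z$; hence $\sigma^{-1}(0)=Z\cup D_Z=\overline{Z}$.

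Next I would set up the dévissage. As $D_M\subseteq\overline{M}$ and $D_Z\subseteq\overline{Z}$ are closed codimension-two submanifolds with complex, hence oriented, normal bundles, the Thom isomorphism gives $H^j(\overline{M},M;R)\iso H^{j-2}(D_M;R)$ and $H^j(\overline{Z},Z;R)\iso H^{j-2}(D_Z;R)$ over any coefficient ring $R$; these fit into a ladder between the long exact sequences of the pairs $(\overline{M},M)$ and $(\overline{Z},Z)$ induced by the inclusion $\overline{Z}\hookrightarrow\overline{M}$. By Theorem~\ref{thm:equivariant_sommese} applied with the trivial subgroup $K=\{1\}$, the restriction $H^*(M;R)\to H^*(Z;R)$ is an isomorphism in every degree, so the five lemma shows that $H^i(\overline{M};R)\to H^i(\overline{Z};R)$ is an isomorphism whenever $H^{i-2}(D_M;R)\to H^{i-2}(D_Z;R)$ is an isomorphism and $H^{i-1}(D_M;R)\to H^{i-1}(D_Z;R)$ is injective. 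It therefore suffices to control the restriction map for the pair $(D_M,D_Z)$ in degrees $\leq k-1$.

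The heart of the argument is the compact case. Here $D_M=M\reda{c}S^1$ is a compact Kähler manifold of complex dimension $n-1$ --- a Kähler reduction of $M\times\BC$ --- and, being the maximum locus of the residual moment map on $\overline{M}$, its normal bundle $\CO_{\overline{M}}(D_M)|_{D_M}=N_{D_M/\overline{M}}$ is the prequantum line bundle of that reduction, so it carries a Hermitian metric whose curvature represents a positive multiple of the reduced Kähler form; it is therefore positive, hence ample by the Kodaira theorem, and $D_M$ is projective. This is exactly where the hypotheses weaker than ``linear type'' are exploited: $\CO_{\overline{M}}(D_M)$ itself is in general only semipositive, but its restriction to $D_M$ is ample. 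Consequently $E|_{D_M}\iso\underline{V}\otimes\CO_{D_M}(D_M)^{\otimes d}\iso\bigl(\CO_{D_M}(D_M)^{\otimes d}\bigr)^{\oplus m}$ is an ample vector bundle of rank $m$ and $\sigma|_{D_M}$ is a section with zero locus $D_Z$, so the Sommese theorem (Theorem~\ref{thm:Sommese}) gives $H^i(D_M,D_Z;R)=0$ for $i\leq(n-1)-m=k-1$. Although Theorem~\ref{thm:Sommese} is stated with $\BZ$-coefficients, the same vanishing holds over an arbitrary $R$: its proof rests on the estimate (Andreotti--Frankel, Sommese) that $D_M\setminus D_Z$, carrying the $m$-convex exhaustion $-\log\lVert\sigma|_{D_M}\rVert_h^2$ for a positively curved metric $h$ on $\CO_{D_M}(D_M)^{\otimes d}$, has the homotopy type of a CW complex of real dimension $\leq(n-1)+m-1$, which together with Poincaré--Lefschetz duality on the open oriented $(2n-2)$-manifold $D_M\setminus D_Z$ forces $H^i(D_M,D_Z;R)=0$ for $i\leq k-1$. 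Hence $H^j(D_M;R)\to H^j(D_Z;R)$ is an isomorphism for $j\leq k-2$ and injective for $j=k-1$.

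Combining the last two paragraphs: for every $i\leq k$ one has $i-2\leq k-2$ and $i-1\leq k-1$, so both inputs to the five-lemma step hold and $H^i(\overline{M};R)\to H^i(\sigma^{-1}(0);R)$ is an isomorphism --- in particular an isomorphism for $i<k$ and injective for $i=k$, which is the assertion (one in fact gets an isomorphism at $i=k$ as well). The step I expect to be the main obstacle is the compact case, and within it the claim that the normal bundle of the boundary divisor is ample --- this is precisely the point at which the structure of the cut compactification, rather than ampleness of $\CO_{\overline{M}}(D_M)$, must be used --- with a close second being the verification that the Sommese vanishing survives base change to an arbitrary coefficient ring, which forces one to argue through the homotopy-type estimate rather than quote the integral statement. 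By comparison, the identification $\sigma^{-1}(0)=\overline{Z}$, the smoothness of $\overline{Z}$, and the five-lemma bookkeeping are routine.
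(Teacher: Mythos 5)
Your architecture is genuinely different from the paper's (which checks that $\sigma$ meets the zero section transversely along $D_{M}$ and then applies Bott's Lefschetz hyperplane theorem iteratively on $\overline{M}$ itself), and the reduction you set up --- the identification $\sigma^{-1}(0)=\overline{Z}$, the Thom--Gysin ladder for the pairs $(\overline{M},M)$ and $(\overline{Z},Z)$, the use of Theorem~\ref{thm:equivariant_sommese} with $K=\{1\}$ on the open strata, and the five-lemma bookkeeping --- is sound. But the step you yourself flag as the crux, positivity of $N_{D_{M}/\overline{M}}$, is a genuine gap, and the justification you offer for it is incorrect. The normal bundle of the boundary divisor of a symplectic cut is the associated bundle $\mu_{S^{1}}^{-1}(c)\times_{S^{1}}\BC$, which is \emph{not} a prequantum line bundle of $(D_{M},\omega_{c})$: by Duistermaat--Heckman its first Chern class is (up to $2\pi$) the derivative in $t$ of the reduced Kähler class $[\omega_{t}]$ of $M\reda{t}S^{1}$, not the class $[\omega_{c}]$ itself (already for $M=\BC^{2}$ one gets $N_{D_{M}}=\CO_{\BP^{1}}(1)$ independently of $c$, whereas the prequantum bundle of $M\reda{c}S^{1}$ would be $\CO_{\BP^{1}}(c)$). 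Since $[\omega_{t}]=[\omega_{c}]+2\pi(t-c)\,c_{1}(N_{D_{M}})$ must be Kähler for all $t>c$, dividing by $t$ and letting $t\to\infty$ shows only that $c_{1}(N_{D_{M}})$ is \emph{nef}; nothing forces it to be positive, so the appeal to Kodaira and to Sommese's theorem on $D_{M}$ does not go through.

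The gap is not reparable under the stated hypotheses, because the conclusion itself fails without an additional positivity assumption. Take $M=B\times\BC$ with $B$ a compact Kähler surface, $S^{1}$ acting on the second factor, and $s(b,w)=w$ with $V=\BC$, $d=1$. This is an $s$-regular decorated Kähler manifold, circle compact with smooth compactification $\overline{M}=B\times\BP^{1}$; here $D_{M}\iso B$ has trivial normal bundle, $\sigma^{-1}(0)=B\times\{0\}$, $k=2$, and the restriction $H^{2}(\overline{M};R)\to H^{2}(\sigma^{-1}(0);R)$ has kernel $H^{0}(B;R)\otimes H^{2}(\BP^{1};R)\neq 0$, contradicting injectivity at $i=k$ (and taking $\dim_{\BC}B\geq 3$ contradicts the isomorphism statement for $i<k$). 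In your scheme this surfaces exactly where you predicted: $E\rvert_{D_{M}}$ is trivial, not ample. For what it is worth, the paper's own proof rests on the parallel unjustified assertion that the Euler class of $\CO_{\overline{M}}(D_{M})$ is represented by the Kähler form, which fails in the same example; so the defect lies in the statement of the theorem, which evidently needs something like the ampleness built into Definition~\ref{def:linear_type} (as in Theorem~\ref{thm:newSommese}). Your version at least localizes the required positivity to the compact piece $D_{M}$, a strictly weaker --- though still not automatic --- demand, and would become a complete proof under the added hypothesis that $N_{D_{M}}$ is ample on $D_{M}$.
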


As in the Lefschetz Hyperplane Theorem, $\overline{M}$ is not required to be projective, but there is a regularity condition on $s$.

\begin{proof}
	We will first of all show that $\sigma$ intersects the $0$ section of $E$ transversely.
	This is clearly the case by $s$-regularity at every point $p \in \sigma^{-1} (0)$ in the interior of $\overline{M}$, as $\sigma$ is identified with $s$ locally around $p$.
	Suppose, instead, that $p \in \sigma^{-1} (0)$ is a boundary point: we will show that $\dif[p]s$ is surjective on $T_{p}D_{M}$, and \emph{a fortiori} on $T_{p} \overline{M}$.
	Using the identification $D_{M} \iso M \reda{c} S^{1}$, choose $\widetilde{p} \in M$ representing $p$.
	If $X$ denotes the vector field generating the $S^{1}$-action on $M$, then $T_{p} D_{M}$ can be realised as the orthogonal complement of $\{ X_{\widetilde{p}}, IX_{\widetilde{p}} \}$ in $T_{\widetilde{p}} M$.
	On the one hand, we know by assumption that $\dif[\widetilde{p}] s$ is surjective on $T_{\widetilde{p}} M$.
	On the other hand
	\begin{equation}
		X_{\widetilde{p}} [s] = \sqrt{-1} ds(\widetilde{p}) = 0 \, ,
	\end{equation}
	where the reader is reminded that $d$ denotes the homogeneity degree of $s$ under the $S^{1}$-action.
	Therefore $IX_{\widetilde{p}} [s]$ also vanishes since $\dif s$ is $\BC$-linear.
	Thus $\dif s$ remains surjective when restricted to the orthogonal complement of $\{ X_{\widetilde{p}}, IX_{\widetilde{p}} \}$, which is enough to conclude that $\sigma$ intersects the $0$ section transversely at $p$.

	Now choose a basis of $V$, to which corresponds a splitting of $E$ as a direct sum of copies of the line bundle $L \coloneqq \CO_{\overline{M}}(D_{M})^{\otimes d}$.
	Since $d > 0$ and the Euler class of $\CO_{\overline{M}} (D_{M})$ is represented by the Kähler form $\omega$, $L$ is positive.
	We will show the result by realising $\sigma^{-1} (0)$ as the intersection of the zero loci of the components $\sigma_{j}$ of $\sigma$.
	To this end, call $Z_{0} = \overline{M}$ and $Z_{j} \coloneqq Z_{j-1} \cap \sigma_{j}^{-1} (0)$ for $j > 0$.
	Since $\sigma \pitchfork 0$ in $E$, one can see that $\sigma_{j} \pitchfork 0$ in $L \rvert_{Z_{j-1}}$, so by induction $Z_{j}$ is a smooth compact Kähler submanifold, on which $L$ remains positive.
	We are then in the setting of the Lefschetz Hyperplane Theorem (see Theorem~\ref{thm:LHT}), and we can conclude that the restriction map
	\begin{equation}
		H^{i} (Z_{j-1}; R) \longrightarrow H^{i} (Z_{j}; R)
	\end{equation}
	is an isomorphism for $i < \dim_{\BC} M - j$ and an embedding for $i = \dim_{\BC} M - j$.
	The result follows by composition.
\end{proof}

\begin{theorem}
	Suppose $M$ is a hyper-regular decorated $G$-manifold with an action of semi-linear type, with $I$ integrable and $\overline{M \red G}$ smooth, and call $k = \dim_{\BC} M - \dim G - \dim_{\BC} V$.
	Then for every coefficient ring $R$ the restriction map
	\begin{equation}
		H^{i} ( \overline{M \red G}; R) \longrightarrow H^{i} ( \overline{M \rred G}; R)
	\end{equation}
	is an isomorphism for $i < k$ and injective for $i = k$.
\end{theorem}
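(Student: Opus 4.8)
The plan is to reduce the statement to Theorem~\ref{thm:newLHT} applied to the Kähler quotient $X \coloneqq M \red G$, exactly as the proof of Theorem~\ref{thm:equivariant_sommese_quotient} reduced its statement to Theorem~\ref{thm:equivariant_sommese}. First I would record the following: by $G$-regularity, $X$ is a smooth manifold carrying a natural almost Kähler structure, which is in fact Kähler because $I$ is integrable; the residual $S^{1}$-action is Hamiltonian with moment map $\check{\mu}_{S^{1}}$, and $\check{s}$ is a holomorphic $V$-valued function, homogeneous of degree $d$ for this action. The computation in the proof of Theorem~\ref{thm:equivariant_sommese_quotient} shows that hyper-regularity forces $0 \in V$ to be a regular value of $\check{s}$, so $X$ is an $s$-regular decorated Kähler manifold. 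The action being of semi-linear type means precisely that $X$ is circle compact, and by hypothesis its cut compactification $\overline{X} = \overline{M \red G}$ is smooth. Finally $\dim_{\BC} X = \dim_{\BC} M - \dim G$, so $k = \dim_{\BC} X - \dim_{\BC} V$, which is exactly the index appearing in Theorem~\ref{thm:newLHT}.

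The one genuine point to settle, which I expect to be the main obstacle, is the identification of the zero locus $\sigma^{-1}(0)$ of the section $\sigma \in \Gamma(\overline{X}, E)$ induced by $\check{s}$ with the cut compactification $\overline{M \rred G}$; equivalently, the compatibility between symplectic cutting and passing to the vanishing locus of an $S^{1}$-homogeneous function. Since $\check{s}$ is $S^{1}$-homogeneous of positive degree, $M \rred G = \check{s}^{-1}(0)$ is an $S^{1}$-invariant closed submanifold of $X$, and it is circle compact (as noted after Definition~\ref{def:linear_type}, being closed in the circle compact $X$), so its cut compactification is defined and embeds naturally into $\overline{X}$ via the inclusion $M \rred G \hookrightarrow X$. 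Over the open part $X \subseteq \overline{X}$ the section $\sigma$ is identified with $\check{s}$, so $\sigma^{-1}(0) \cap X = \check{s}^{-1}(0) = M \rred G$; over the boundary divisor $D_{X} \iso X \reda{c} S^{1}$ the vanishing of $\sigma$ cuts out precisely the reduction of $\check{s}^{-1}(0)$, namely $D_{M \rred G}$. Gluing the two descriptions gives $\sigma^{-1}(0) = M \rred G \cup D_{M \rred G} = \overline{M \rred G}$, with the embedding into $\overline{X}$ induced by $M \rred G \hookrightarrow X$. (The transversality established inside the proof of Theorem~\ref{thm:newLHT} moreover shows $\sigma^{-1}(0)$ is smooth, so $\overline{M \rred G}$ is a bona fide smooth compactification.)

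With these identifications in hand the conclusion is immediate: $X$ satisfies the hypotheses of Theorem~\ref{thm:newLHT}, so the restriction map
\[
	H^{i}(\overline{M \red G}; R) = H^{i}(\overline{X}; R) \longrightarrow H^{i}(\sigma^{-1}(0); R) = H^{i}(\overline{M \rred G}; R)
\]
is an isomorphism for $i < k$ and injective for $i = k$, for every coefficient ring $R$.
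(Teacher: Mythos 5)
Your proposal is correct and follows essentially the same route as the paper: verify that $M \red G$ is a circle compact, $s$-regular decorated Kähler manifold with smooth cut compactification, identify $\overline{M \rred G}$ with the zero locus of the induced section $\check{\sigma}$, and apply Theorem~\ref{thm:newLHT}. You simply spell out in more detail (the regularity of $\check{s}$ via the argument of Theorem~\ref{thm:equivariant_sommese_quotient}, and the interior/boundary decomposition of $\check{\sigma}^{-1}(0)$) what the paper's proof states in compressed form.
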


Again, by hyper-regularity $k = \dim M \rred G$.

\begin{proof}
	Under our assumptions, $M \red G$ is a decorated Kähler manifold, regular for the induced function $\check{s}$, circle compact, and with smooth compactification.
	The result follows from Theorem~\ref{thm:newLHT} after noticing that $\overline{M \rred G}$ is the zero locus of the section $\check{\sigma}$ corresponding to $\check{s}$.
\end{proof}

\section{Hypertoric case}
\label{sec:hypertoric}

We now return to the setting that first inspired the present work: that of a hyperkähler manifold $M$ with a tri-hamiltonian action of a compact Lie group $G$.
We will assume the latter is part of a $G \times S^{1}$-action which is Hamiltonian for $\omega_{1}$, with $S^{1}$ acting by hyperkähler rotations.
In that case, the complex moment map $s \coloneqq \mu_{\BC} \colon M \to \fgd_{\BC} \eqqcolon V$ is $I$-holomorphic and $S^{1}$-homogeneous of some degree $d$, which we shall assume positive.
If $0$ is a regular value for the real moment map $\mu_{G} = \mu_{\BR}$, then the same is true for $s$, and $\mu_{G}^{-1} (0) \pitchfork s^{-1} (0)$.

While this makes $M$ (with the Kähler structure $I$) a decorated manifold, the function $s$ is \emph{$G$-equivariant} in general, rather than invariant, unless $G$ is abelian.
For that reason, in order to apply the results from the previous sections we will need to replace $G$ with a torus $T$, and we will say that $M$ is hypertoric.
That being the case, if $M$ is regular (and therefore hyper-regular) then the hyper-reduction $M \rred T$ is the usual hyperkähler quotient.

\begin{theorem}
	Let $M$ be a hyperkähler manifold acted on isometrically by $T \times S^{1}$, with the two factors tri-hamiltonian and $\omega_{1}$-Hamiltonian by positive hyperkähler rotations, respectively.
	Suppose that the resulting decorated $T$-manifold is hyper-regular and that the action is of semi-linear type.
	Then the restriction map
	\begin{equation}
		H^{*} (M \red T; R) \longrightarrow H^{*} (M \rred T; R)
	\end{equation}
	is an isomorphism for every coefficient ring $R$.
	Moreover,
	\begin{equation}
		H^{i} (M \rred T; \BQ) = 0 \qquad \text{for $i > \dim_{\BC} (M \rred T) = \dim_{\BC} M - 2 \dim T$,}
	\end{equation}
	and therefore the same is true for $H^{i} (M \red T; \BQ)$.
	Finally, the hyper-Kirwan map
	\begin{equation}
		\kappa_{T,s} \colon H^{*}_{T} (M; \BQ) \longrightarrow H^{*} (M \rred T; \BQ)
	\end{equation}
	in \emph{rational} cohomology is surjective.
\end{theorem}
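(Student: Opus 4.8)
The plan is to assemble the three assertions from the machinery of Sections~\ref{sec:kirsurj} and~\ref{sec:restrictions}, the only genuinely new ingredient being the vanishing statement. First I would check that the hypotheses make $M$, equipped with $s = \muc$ and the complex structure $I$, a hyper-regular decorated $T$-manifold whose $T \times S^{1}$-action is of semi-linear type: the function $s$ is $I$-holomorphic and $S^{1}$-homogeneous of degree $d > 0$ by the discussion opening this section, it is $T$-invariant because $T$ is abelian (so the coadjoint action on $V = \fgd_{\BC}$ is trivial), hyper-regularity is assumed, and semi-linear type is assumed. With this in place, the restriction isomorphism $H^{*}(M \red T; R) \to H^{*}(M \rred T; R)$ is exactly the conclusion of Theorem~\ref{thm:equivariant_sommese_quotient}, and surjectivity of the hyper-Kirwan map $\kappa_{T,s}$ in rational cohomology is exactly Theorem~\ref{thm:main}.

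It then remains to prove the vanishing $H^{i}(M \rred T; \BQ) = 0$ for $i > k \coloneqq \dim_{\BC} M - 2 \dim T$, after which the corresponding statement for $M \red T$ follows from the restriction isomorphism, which holds in every degree. For this I would run Morse--Bott theory with the $S^{1}$-moment map $\phi$ on $M \rred T$. The hyperkähler quotient $M \rred T$ is again hyperkähler of complex dimension $k$, it carries the residual $S^{1}$-action by hyperkähler rotations preserving $I$ and $\omega_{1}$, and it is circle compact, being a closed submanifold of $M \red T$ (as noted after Definition~\ref{def:linear_type}); hence $\phi$ is a proper, bounded-below Morse--Bott function with compact critical set, and its downward gradient flow is complete and convergent everywhere. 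The key local computation is at a fixed point $p$: the tangent space $T_{p}(M \rred T)$ is a complex $S^{1}$-representation on which $\omegac\rvert_{p}$ is a non-degenerate skew form of weight $d$, so its weight-$w$ subspace $V_{w}$ pairs perfectly with $V_{d-w}$. Since $d > 0$, the negative-weight part has the same dimension as the part of weight $> d$, and $\dim_{\BC} V_{0} = \dim_{\BC} V_{d}$; writing the Morse index at the fixed component $\CC$ as $\lambda_{\CC} = 2 \sum_{w < 0} \dim_{\BC} V_{w}$ and $\dim_{\BR} \CC = 2 \dim_{\BC} V_{0}$, one obtains
\begin{equation*}
	\lambda_{\CC} + \dim_{\BR} \CC = k - \sum_{0 < w < d} \dim_{\BC} V_{w} \leq k \, .
\end{equation*}
Plugging this bound into the long exact sequences of the Morse stratification --- each stratum contributes a copy of $H^{*-\lambda_{\CC}}(\CC)$, supported in degrees at most $\lambda_{\CC} + \dim_{\BR} \CC \leq k$ --- gives $H^{i}(M \rred T) = 0$ for $i > k$. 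Morally, this is the assertion that a semiprojective hyperkähler manifold deformation retracts onto a core of half its complex dimension.

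The main obstacle is this vanishing step, and within it the only delicate point is making the Morse stratification argument rigorous in the non-compact, merely almost-Kähler setting. Concretely, one must observe that $\phi$ has only finitely many critical values (the fixed set being compact) and that, by completeness of the downward flow together with the fact that a regular level can be crossed in a continuously varying finite time, $M \rred T$ deformation retracts onto a sublevel set $\phi^{-1}\bigl((-\infty, a]\bigr)$ with $a$ above all critical values; that sublevel set is compact and carries a finite Morse--Bott stratification whose strata satisfy the degree bound above, so its cohomology, equal to that of $M \rred T$, vanishes in degrees $> k$. Everything else is direct citation, and I would close by reassembling the three pieces: the restriction isomorphism from Theorem~\ref{thm:equivariant_sommese_quotient}, the vanishing just argued transported via that isomorphism, and the surjectivity of $\kappa_{T,s}$ from Theorem~\ref{thm:main}.
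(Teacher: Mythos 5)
Your proposal is correct and follows essentially the same route as the paper: the restriction isomorphism and the surjectivity of $\kappa_{T,s}$ are obtained by direct citation of Theorems~\ref{thm:equivariant_sommese_quotient} and~\ref{thm:KirwanSurj}/\ref{thm:main}, and the vanishing is proved by exactly the paper's (Nakajima-style) argument, namely Morse--Bott theory with the $S^{1}$-moment map together with the non-degenerate pairing of the weight spaces $V_{w}$ and $V_{d-w}$ by the degree-$d$ homogeneous form $\omegac$, yielding $\lambda_{\CC} + \dim_{\BR}\CC \leq \dim_{\BC}(M \rred T)$. The only cosmetic difference is that the paper isolates the vanishing step as a standalone theorem about circle compact almost Kähler manifolds carrying a homogeneous complex symplectic form, whereas you inline it for $M \rred T$.
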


The first part of this statement is now a consequence of Theorem~\ref{thm:equivariant_sommese_quotient}.
By Theorem~\ref{thm:KirwanSurj}, the Kirwan map
\begin{equation}
	\kappa \colon H^{*}_{T} (M; \BQ) \longrightarrow H^{*} (M \red T; \BQ)
\end{equation}
is surjective, and the same follows for the hyper-Kirwan map by the isomorphism $H^{*} (M \red T; \BQ) \iso H^{*} (M \rred T; \BQ)$.
The only remaining part is the vanishing statement, which is a consequence of the following.

\begin{theorem}
	Suppose that $M$ is an almost Kähler manifold, circle compact for an $S^{1}$-action preserving the complex structure.
	If $M$ admits an additional complex symplectic form $\omegac$ which is homogeneous of positive degree $d$ for the $S^{1}$-action, then $H^{i} (M)$ vanishes for $i > \dim_{\BC} M$.
\end{theorem}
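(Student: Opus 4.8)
The plan is to exploit the homogeneity of $\omegac$ to show that the complex symplectic form cannot possibly survive on a top-dimensional cut compactification, and to read off the vanishing of high-degree cohomology from a Lefschetz-type argument on $\overline{M}$. More precisely, I would first observe that since $M$ is circle compact we may form the cut compactification $\overline{M}$ with boundary divisor $D_{M}$, as in Definition~\ref{def:cut_compactification}. Because $\omegac$ is holomorphic and $S^{1}$-homogeneous of degree $d > 0$, it does not descend to a form on $\overline{M}$ but rather to a section of $\Lambda^{2} T^{*}\overline{M} \otimes \CO_{\overline{M}}(D_{M})^{\otimes d}$; equivalently, $\omegac$ extends to a meromorphic $2$-form on $\overline{M}$ with poles of order $d$ along $D_{M}$. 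Alternatively, and more in keeping with the tools already developed, one can regard $\omegac^{\wedge n}$ (where $n = \dim_{\BC} M$) as a section of $K_{\overline{M}}^{\otimes ?} \otimes \CO_{\overline{M}}(D_{M})^{\otimes nd}$ which is nowhere vanishing on $M$, and hence whose zero locus is supported on $D_{M}$.

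The cleanest route, I expect, is the following. The holomorphic $2$-form $\omegac$ defines a holomorphic bundle map $T M \to T^{*} M$ which is an isomorphism (since $\omegac$ is nondegenerate); after cutting, this becomes a map of sheaves on $\overline{M}$ twisted by $\CO_{\overline{M}}(D_{M})^{\otimes d}$, so that $T\overline{M} \otimes \CO_{\overline{M}}(-D_{M})^{\otimes d} \hookrightarrow T^{*}\overline{M}$ with cokernel supported on $D_{M}$. Taking determinants, we get $K_{\overline{M}}^{-1} \otimes \CO_{\overline{M}}(-D_{M})^{\otimes nd} \cong K_{\overline{M}} \otimes \mathcal{L}$ for some effective line bundle $\mathcal{L}$ supported on $D_{M}$, which forces $K_{\overline{M}}^{\otimes 2} \otimes \CO_{\overline{M}}(D_{M})^{\otimes nd}$ to have a section vanishing only along $D_{M}$. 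Combined with the positivity of $\CO_{\overline{M}}(D_{M})$, this says that $K_{\overline{M}}$ is, up to the divisor at infinity, as negative as $\CO_{\overline{M}}(D_{M})$ is positive. Now apply Kodaira vanishing, or rather its affine incarnation: $H^{i}(M) = H^{i}(\overline{M} \setminus D_{M})$, and since $D_{M}$ is an ample divisor one has $H^{i}(\overline{M} \setminus D_{M}; \BQ) = 0$ for $i > \dim_{\BC} M$ by the Andreotti--Frankel / Artin vanishing theorem for affine (or Stein) varieties of complex dimension $n$ — the complement of an ample divisor in a projective variety is affine, and an affine variety of complex dimension $n$ has the homotopy type of a CW complex of real dimension $\leq n$.

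Assembling these: I would argue (i) $\overline{M}$ is a projective variety, using that $\CO_{\overline{M}}(D_{M})$ is ample and (via the Lefschetz/Sommese machinery already invoked in Section~\ref{sec:restrictions}) that $\overline{M}$ is a compact Kähler orbifold with an ample class, hence projective; (ii) $D_{M}$ is an ample divisor, so $\overline{M} \setminus D_{M} \cong M$ is a Stein space, indeed affine, of complex dimension $n = \dim_{\BC} M$; (iii) therefore $M$ has the homotopy type of a CW complex of dimension $\leq n$, whence $H^{i}(M) = 0$ for $i > n$. The role of the homogeneous complex symplectic form $\omegac$ in this chain is essentially to guarantee that $\overline{M}$ is genuinely a manifold with the "right" canonical bundle behaviour near $D_{M}$ so that the complement is affine rather than merely Stein — but in fact, once $\CO_{\overline{M}}(D_{M})$ is known to be ample, affineness of the complement is automatic and the symplectic form is needed only to ensure (via linear-type considerations) that we are in the ample setting.

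The main obstacle I anticipate is \emph{two-fold}. First, one must be careful that $\overline{M}$ need only be an orbifold, not a smooth projective variety, so the vanishing theorems must be applied in the orbifold (or $\BQ$-variety) setting — this is why the statement is for rational coefficients, and Artin-type vanishing does hold for $\BQ$-coefficient cohomology of complex affine varieties regardless of singularities, so this is surmountable but should be stated carefully. Second, and more subtly, nothing in the hypotheses asserts that $\CO_{\overline{M}}(D_{M})$ is ample — only that $M$ is almost Kähler and circle compact with a homogeneous $\omegac$. So the genuine hard step is to \emph{deduce} ampleness (or at least that $M$ is affine, i.e. holomorphically convex with no positive-dimensional compact subvarieties) from the existence of $\omegac$; here I would use that $\mu_{S^{1}}$ is an exhausting plurisubharmonic function on $M$ (being, up to sign conventions, a Kähler potential deformation adapted to the $S^{1}$-action) together with the homogeneity of $\omegac$ forcing the $S^{1}$-fixed locus — the minimum of $\mu_{S^{1}}$ — to be isotropic for $\omegac$ and hence a point or at least totally real in a controlled way, ruling out compact complex subvarieties and giving Steinness of $M$ directly, thereby bypassing the ampleness question altogether. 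Once $M$ is known to be Stein of dimension $n$, the conclusion $H^{i}(M) = 0$ for $i > n$ is immediate from the Andreotti--Frankel theorem.
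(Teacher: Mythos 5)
Your approach has a genuine gap, and the intermediate statement it rests on is false. The strategy is to show that $M$ is Stein (or affine, as the complement of an ample divisor) and then invoke Andreotti--Frankel/Artin vanishing. But the hypotheses of the theorem are satisfied by, for example, $M = T^{*}\BP^{1}$ with the fibrewise $S^{1}$-action: the fixed locus is the zero section $\BP^{1}$ (compact), the moment map is essentially the fibre norm (proper and bounded below), and the canonical holomorphic symplectic form is homogeneous of degree $1$. This $M$ contains a compact holomorphic curve, so it is neither Stein nor affine, and $\CO_{\overline{M}}(D_{M})$ cannot be ample. The specific step that fails is your claim that isotropy of the fixed locus for $\omegac$ forces it to be ``a point or totally real'': a holomorphic Lagrangian --- such as the zero section of a cotangent bundle, or more generally the core of a hypertoric or quiver variety --- is isotropic for $\omegac$ and is still a positive-dimensional compact complex subvariety. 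Relatedly, $\mu_{S^{1}}$ is not plurisubharmonic in general; its Hessian at a fixed point has signature governed by the isotropy weights, which need not be nonnegative. An independent problem is that the theorem only assumes $M$ is \emph{almost} Kähler, so $\overline{M}$ need not be a complex space at all and none of the projective/Stein/canonical-bundle machinery in your first two paragraphs is available.

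The paper's proof is instead purely Morse-theoretic, following Nakajima: for each critical component $\CC$ of $\mu_{S^{1}}$ one shows $\lambda_{\CC} + \dim_{\BR}\CC \le \dim_{\BC} M$, because degree-$d$ homogeneity forces $\omegac$ to pair the weight-$k$ subspace of $T_{x}M$ nondegenerately with the weight-$(d-k)$ subspace, so every subspace of nonpositive weight is matched in dimension by one of positive weight; hence the nonpositive-weight part has at most half the total dimension. This bound on index plus dimension of the critical locus is what replaces Steinness and yields vanishing above the complex dimension. Your instinct that homogeneity of $\omegac$ constrains the geometry at the bottom of $\mu_{S^{1}}$ is right, but the constraint it actually yields is on isotropy weights, not on the absence of compact complex subvarieties.
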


\begin{proof}
	The argument is essentially the same as \cite[Lemma 5.6]{Nakajima94}, but we repeat it here for completeness.
	By Morse theory with the $S^1$-moment map, the claim will follow if we establish that, for every critical component $\CC$ of $M^{S^1}$, we have the inequality $\lambda_{\CC} + \dim_{\BR} \CC \leq \dim_{\BC} M$, where $\lambda_{\CC}$ denotes the Morse index of $\CC$.

	To show the inequality, suppose $\CC$ is such a component, and for $x \in \CC$ and $k \in \BZ$ let $T_{x} M(k) \subseteq T_{x} M$ denote the subspace of weight $k$ in the isotropy representation.
	Since these subspaces coincide with the eigenspaces of the Hessian of $\mu_{S^{1}}$, one obtains that $\lambda_{\CC} = \sum_{k < 0} \dim_{\BR} T_x M(k)$ and $T_{x} M(0) = T_{x} \CC$, so
	\begin{equation}
		\lambda_{\CC} + \dim_{\BR} \CC = \sum_{k \leq 0} \dim_\BR T_xM(k).
	\end{equation}
	Given that $\omegac$ is homogeneous of degree $d>0$, it pairs the weight spaces $T_{x} M(k)$ and $T_{x} M(d-k)$ non-degenerately.
	Since at least one of $k$ or $d-k$ is positive, we find that every subspace of non-positive weight is matched in dimension by a positive one.
	Therefore, the sum of the dimensions of such subspaces is at most half the dimension of $T_{x} M$, i.e.
	\begin{equation}
		\lambda_{\CC} + \dim_{\BR} \CC \leq \frac{1}{2} \dim_{\BR} M = \dim_\BC M \, ,
	\end{equation}
	which is the desired inequality.
\end{proof}


\end{document}